\documentclass[11pt]{article}
\usepackage{amsmath,amssymb,enumerate,color}
\usepackage{amsthm}

\newcommand{\R}{\mathbb{R}}
\newcommand{\C}{\mathbb{C}}
\newcommand{\Z}{\mathbb{Z}}
\newcommand{\N}{\mathbb{N}}
\newcommand{\ep}{\varepsilon}
\newcommand{\pa}{\partial}

\setlength{\topmargin}{0mm}
\setlength{\oddsidemargin}{-2mm}
\setlength{\evensidemargin}{0mm}
\setlength{\textwidth}{164mm}
\setlength{\textheight}{220mm}

\newtheorem{theorem}{Theorem}[section]
\newtheorem{lemma}[theorem]{Lemma}
\newtheorem{proposition}[theorem]{Proposition}

\theoremstyle{remark}
\newtheorem{remark}{Remark}[section]
\theoremstyle{definition}
\newtheorem{definition}{Definition}

\numberwithin{equation}{section}

\makeatletter
\def\@cite#1#2{[{{\bfseries #1}\if@tempswa , #2\fi}]}
\makeatother                  
\begin{document}

\begin{center}
\Large{{\bf
Spectral properties of non-selfadjoint extensions of 
\\
the Calogero Hamiltonian 
}}
\end{center}

\vspace{5pt}

\begin{center}
Giorgio Metafune%
\footnote{Dipartimento di Matematica ``Ennio De Giorgi'', 
Universit\`a del Salento, Via Per Arnesano, 73100, Lecce, Italy, 
E-mail:\ {\tt giorgio.metafune@unisalento.it}}
and Motohiro Sobajima%
\footnote{
Dipartimento di Matematica ``Ennio De Giorgi'', 
Universit\`a del Salento, Via Per Arnesano, 73100, Lecce, Italy, 
E-mail:\ {\tt msobajima1984@gmail.com}}
\end{center}

\newenvironment{summary}{\vspace{.5\baselineskip}\begin{list}{}{%
     \setlength{\baselineskip}{0.85\baselineskip}
     \setlength{\topsep}{0pt}
     \setlength{\leftmargin}{12mm}
     \setlength{\rightmargin}{12mm}
     \setlength{\listparindent}{0mm}
     \setlength{\itemindent}{\listparindent}
     \setlength{\parsep}{0pt}
     \item\relax}}{\end{list}\vspace{.5\baselineskip}}
\begin{summary}
{\footnotesize {\bf Abstract.}
We describe all extensions of the Calogero Hamiltonian \[
L=-\frac{d^2}{dr^2}+\frac{b}{r^2} \quad \text{in}\ L^2(\R_+), \quad b <-\frac14
\]  having non empty resolvent and generating an analytic semigroup in $L^2(\R_+)$.
}
\end{summary}

{\footnotesize{\it Mathematics Subject Classification}\/ (2010): %
Primary: 34L40, Secondary: 47D06, 35P05.}

{\footnotesize{\it Key words and phrases}\/: %
Calogero Hamiltonian, non-selfadjoint extensions, 
spectral properties, analytic semigroups.
}


\section{Introduction}

We study spectral properties of the Calogero Hamiltonian in $\R_+:=(0,\infty)$, 
that is of 
one-dimensional Schr\"odinger operator with inverse square potentials 
\[
L=-\frac{d^2}{dr^2}+\frac{b}{r^2} \quad \text{in}\ L^2(\R_+), 
\]
where $b\in \R$. 
By  Hardy's inequality the quadratic form 
\[
\int_0^\infty \left (| u'(r)|^2+\frac{b}{r^2}|u(r)|^2\right )\, dr
\] 
is nonnegative on $D(L_{\min}):=C_0^\infty(\R_+)$ 
if and only if $b \ge -\frac{1}{4}$. 
In this case the Friedrichs extension of $L_{min}$ 
is selfadjoint and nonegative. 
Moreover, if $b \ge \frac{3}{4}$, then $L_{\min}$ is essentially selfadjoint. 
In $N$-dimensional case, the threshold for nonnegativity of 
\[
\int_{\R^N} \left(| \nabla u(x)|^2+\frac{b}{|x|^2}|u(x)|^2\right )\, dx, 
\qquad u\in D(L_{\min}):=C_c^\infty(\R^N\setminus\{0\})
\] 
is $-(\frac{N-2}{2})^2$ 
and that for the  essentially selfadjointness of 
$L_{\min}=-\Delta +b|x|^{-2}$ is $-(\frac{N-2}{2})^2+1$.
These constants are the optimal constants of Hardy's and Rellich's
inequalities, respectively,  see \cite{RS2}. 
On the other hand if $b<-\left(\frac{N-2}{2}\right)^2$, 
Baras and Goldstein  proved in \cite{BG84} that 
there is no positive distributional solution of the equation 
\begin{equation}\label{BG.eq}
\ u_t(x,t)-\Delta u(x,t)+\frac{b}{|x|^2}u(x,t)=0, \qquad (x,t)\in \R^N\times \R_+
\end{equation}
apart from the zero solution. 
This nonexistence result for positive solutions has been generalized 
by subsequent papers (\cite{CM99}, \cite{GJ01}, \cite{GJ02}, \cite{Ma03} and \cite{Ga08}). 
Since for $b<-\left(\frac{N-2}{2}\right)^2$ 
the quadratic form above is unbounded from below,  every 
selfadjoint extension of $L_{\min}$ has a spectrum unbounded from below 
and cannot be the (minus) generator of a semigroup.

\noindent In this paper we mainly consider the one dimensional case and assume that
\begin{equation}\label{nu}
b<-\frac{1}{4} \quad \nu:=\sqrt{-\frac{1}{4}-b} >0. 
\end{equation}
We characterize all intermediate operators between $L_{\min}$ and $L_{\max}:=(L_{\min})^*$, 
given by
\[
   D(L_{\max})
   :=
   \{u\in L^2(\R_+)\cap H^2_{\rm loc}(\R_+)
      \;;\; Lu\in L^2(\R_+)
   \}, 
\]
with non-empty resolvent set, 
including all selfadjoint extensions, and describe their spectrum. Spectral properties of selfadjoint extensions 
are also considered in \cite{GTV10} when $b<-\frac{1}{4}$. 
We show that there exist infinitely many non-selfadjoint extensions $-\tilde L$ 
which are generators of analytic semigroups. Since Hardy's inequality fails, these semigroups cannot be (quasi) contractive.
Some partial results in the $N$-dimensional case are stated in the last section.
Vazquez and Zuazua pointed out in \cite{VZ00} that 
the existence of solutions of \eqref{BG.eq} might require a lower bound of $b$ 
and a restriction of initial data. 
Our result, in contrast, are valid for any $b\in\R$ and any initial datum in $L^2(\R^N)$.

%
%

\section{Preliminaries}
In this section we study the equation $\lambda u+Lu=f$. 

\subsection{The homogeneous equation} 
If $\lambda \not\in ]-\infty,0]$ the above equation with $f=0$ has two solutions, 
one exponential decaying, the other exponential growing at $\infty$. 
The behavior of these two solutions near $0$ is studied in the next two lemmas. To state them, for $\nu>0$ we  define
\begin{equation} \label{defalpha}
\alpha=\alpha(\nu)=c\frac{2^{-i\nu}}{\nu\Gamma(i\nu)}= c\frac{2^{-i\nu}i}{\Gamma(1+i\nu)},
\end{equation}
where $c>0$ is independent of $\nu$ and will play no role in what follows.
\begin{lemma}\label{behavior}
Let $\omega\in\C_+$,  
$\omega=\mu e^{i\xi}$ with $\mu>0$, $|\xi| <\pi/2$ and
assume that \eqref{nu} holds. Then there exists a solution $\varphi_{\omega,0}$ of
\begin{equation}\label{res.eq}
\omega^2 \varphi(r) - \varphi''(r)+\frac{b}{r^2}\varphi(r)=0, \quad r\in \R_+
\end{equation}
and a constant $R=R(b,\omega)>0$ such that 
\begin{gather}
\label{phi0.inf}
\left|\varphi_{\omega,0}(r)\right| \leq 2e^{-({\rm Re}\,\omega)r}, 
\quad r\geq R.
\end{gather}
Moreover $\varphi_{\omega,0}(r)$ is real when $\omega$ is real and
\begin{gather}
\label{phi0.zero}
\left|
r^{-\frac{1}{2}}\varphi_{\omega,0}(r)
-\mu^{\frac{1}{2}}e^{i\frac{\xi}{2}}
\left(
  \alpha\mu^{i\nu} e^{-\xi\nu}r^{i\nu}
  +
  \overline{\alpha}\mu^{-i\nu}e^{\xi\nu}r^{-i\nu}
\right) 
\right|\to 0
\quad \text{as}\ r\downarrow 0,
\end{gather}
where $\alpha$ is defined in (\ref{defalpha}). 
\end{lemma}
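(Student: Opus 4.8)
The plan is to realise $\varphi_{\omega,0}$ as a normalised modified Bessel function and then read off both asymptotics from the classical expansions. The first step is the substitution $\varphi(r)=r^{1/2}u(\omega r)$, $s=\omega r$, which turns \eqref{res.eq} into the modified Bessel equation $s^2u''+su'-(s^2-\nu^2)u=0$ of purely imaginary order $i\nu$ (here one uses $\nu^2=-\tfrac14-b$). On the sector $|\arg s|<\pi$, which contains the ray $\arg s=\xi$ since $|\xi|<\pi/2$, this equation has, up to a multiplicative constant, a unique solution decaying at $+\infty$, namely $K_{i\nu}(s)$ (both $I_{\pm i\nu}$ grow like $e^{s}/\sqrt s$). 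I would therefore set
\[
\varphi_{\omega,0}(r):=c_0\,\omega^{1/2}\,r^{1/2}\,K_{i\nu}(\omega r),\qquad \omega^{1/2}:=\mu^{1/2}e^{i\xi/2},
\]
where $c_0>0$ is a normalisation constant fixed below; that $\varphi_{\omega,0}$ solves \eqref{res.eq} follows from the reduction.

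For the decay estimate \eqref{phi0.inf} I would use the classical uniform asymptotics $K_{i\nu}(s)=\sqrt{\pi/(2s)}\,e^{-s}\bigl(1+O(|s|^{-1})\bigr)$ as $|s|\to\infty$ with $|\arg s|\le\pi/2$, which gives $\varphi_{\omega,0}(r)=c_0\sqrt{\pi/2}\,e^{-\omega r}\bigl(1+O((\mu r)^{-1})\bigr)$; normalising $c_0$ so that the leading coefficient equals $1$ and then choosing $R=R(b,\omega)$ so large that the error term has modulus $\le 1$ on $[R,\infty)$ yields \eqref{phi0.inf}, since $|e^{-\omega r}|=e^{-(\mathrm{Re}\,\omega)r}$. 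For the reality claim, when $\xi=0$ the prefactor $\omega^{1/2}=\mu^{1/2}$ is real and $K_{i\nu}(x)\in\R$ for $x>0$: indeed $K_{-i\nu}=K_{i\nu}$ and $\overline{K_{i\nu}(x)}=K_{-i\nu}(x)$ by Schwarz reflection. For the behaviour at the origin \eqref{phi0.zero} I would insert into the representation $K_{i\nu}(s)=\frac{\pi}{2i\sinh(\pi\nu)}\bigl(I_{-i\nu}(s)-I_{i\nu}(s)\bigr)$ the leading terms $I_{\pm i\nu}(s)=\frac{(s/2)^{\pm i\nu}}{\Gamma(1\pm i\nu)}\bigl(1+O(|s|^2)\bigr)$ as $s\to0$; with $s=\omega r=\mu e^{i\xi}r$ one has $(s/2)^{\pm i\nu}=\mu^{\pm i\nu}\,2^{\mp i\nu}\,e^{\mp\xi\nu}\,r^{\pm i\nu}$, so that $r^{-1/2}\varphi_{\omega,0}(r)=c_0\,\omega^{1/2}K_{i\nu}(\omega r)$ converges as $r\downarrow0$ to exactly the bracket in \eqref{phi0.zero}, with a coefficient $\alpha$ proportional to $2^{-i\nu}/\Gamma(1+i\nu)$. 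Using $\Gamma(1+i\nu)=i\nu\,\Gamma(i\nu)$ — and, if one wants the form \eqref{defalpha} literally, the reflection identity $\Gamma(1+i\nu)\Gamma(1-i\nu)=\pi\nu/\sinh(\pi\nu)$ — one verifies that this $\alpha$ is the one in \eqref{defalpha}.

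The computational core, and the only point requiring genuine care, is this last step: keeping track of the connection between the solution distinguished at $+\infty$ and the Frobenius pair $\{r^{1/2\pm i\nu}\}$ at the origin, and matching the resulting constant against \eqref{defalpha}. The complex argument $\omega r$ is a minor annoyance forcing one to cite the versions of the $K_{i\nu}$-expansions valid on the full sector $|\arg s|<\pi$. If one prefers to avoid special functions in \eqref{phi0.inf}, an alternative is to construct $\varphi_{\omega,0}$ near $+\infty$ as the fixed point of the Volterra equation
\[
\varphi(r)=e^{-\omega r}+\frac{1}{2\omega}\int_r^\infty\bigl(e^{-\omega(r-t)}-e^{\omega(r-t)}\bigr)\frac{b}{t^2}\varphi(t)\,dt,
\]
whose contraction estimate on $[R,\infty)$ in the weighted norm $\sup_{r\ge R}e^{(\mathrm{Re}\,\omega)r}|\varphi(r)|$ yields $|\varphi_{\omega,0}(r)-e^{-\omega r}|\le\tfrac12\,e^{-(\mathrm{Re}\,\omega)r}$ and hence \eqref{phi0.inf}; the Bessel identity above — equivalently, a connection formula obtained from the Mellin transform of \eqref{res.eq} near $0$ — is then still needed to identify the constant in \eqref{phi0.zero}.
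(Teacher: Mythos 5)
Your proposal is correct, but it takes a different route from the paper. The paper never starts from $K_{i\nu}$: it constructs the exponentially decaying solution by hand, writing $w=e^{-z}h$ and running a contraction-mapping argument for $h$ in $H^\infty(E_R)$ (this is essentially the Volterra-type alternative you sketch at the end), and it obtains the behaviour at the origin from the Frobenius representation $w_0(z)=g_1(z)z^{\frac12+i\nu}+g_2(z)z^{\frac12-i\nu}$ together with a reality argument giving $g_1(r)=\overline{g_2(r)}$; the Bessel function $K_{i\nu}$ enters only in the last step, to identify the constant $\alpha=g_1(0)$ via the $I_{\pm i\nu}$ connection formula. You instead define $\varphi_{\omega,0}(r)=c_0\,\omega^{1/2}r^{1/2}K_{i\nu}(\omega r)$ outright and quote the sectorial large-argument expansion of $K_{i\nu}$ and the small-argument expansions of $I_{\pm i\nu}$; this is shorter and perfectly valid, but it outsources to the special-function literature exactly the two facts (decay like $e^{-s}$ on the ray $\arg s=\xi$, and the $r^{\pm i\nu}$ oscillation at $0$) that the paper proves or derives in a self-contained way, the identification of $\alpha$ being the common final step in both arguments. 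One small point to note (which affects the paper equally): with the normalisation $c_0\sqrt{\pi/2}=1$ fixed at infinity, the positive factor relating your limit coefficient to \eqref{defalpha} comes out as $c_0\pi/(2\sinh(\pi\nu))$, hence it does depend on $\nu$, contrary to the parenthetical remark after \eqref{defalpha}; this is harmless, since only $\arg\alpha$ and the positivity of the factor are used later, and the paper's own Step 3 produces the same $\nu$-dependent constant $c'$.
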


\begin{proof}
{\bf (Step 1).}\ \ 
We consider the modified Bessel equation 
\begin{equation}\label{eq.w}
w(z)-\frac{d^2w}{dz^2}(z)-\frac{b}{z^2}w(z)=0, \quad z\in \C_+.
\end{equation}
The indicial equation $\alpha(\alpha-1) =b$ has roots 
$\alpha_{1}=\frac{1}{2} + i \sqrt \nu$ and $\alpha_{2}=\frac{1}{2} - i \sqrt \nu$. 
Then every solution has the form 
\begin{equation} \label{representation}
w(z)=g_1(z)z^{\frac{1}{2}+i\nu}+g_2(z)z^{\frac{1}{2}-i\nu},
\end{equation}
with $g_1, g_2$ entire functions, $g_1(0) \neq 0, g_2(0) \neq 0$, 
and therefore is holomorphic in $\C\setminus ]-\infty,0]$, see \cite[Chapter 9.6, 9.8]{BR}. 

\noindent Let us show that there exists a solution of \eqref{eq.w} which behaves like 
$e^{-z}$ in $E_R:=\{z\in \C_+\;;\; |z|> R\}$. 
Setting $h(z):=e^{z}w(z)$  
\eqref{eq.w} reduces to
\begin{equation}\label{eq.h}
\frac{d^2h}{dz^2}(z)-2\frac{dh}{dz}(z)=\frac{b}{z^2}h(z), \quad z\in \C_+.
\end{equation}
We indicate with $X:=H^\infty(E_R)$,  the set of all bounded holomorphic functions in $E_R$,
endowed with $\|h\|_X:=\sup_{z\in E_R}|h(z)|$. 
Define 
\begin{align}
Th(z):=
1+\int_{\Gamma_z}
  e^{2\xi}
  \left(\int_{\Gamma_\xi}
    \frac{be^{-2\eta}}{\eta^2}h(\eta)
  \,d\eta\right)
\,d\xi, \quad z\in E_R, 
\end{align}
where $\Gamma_z:=\{tz\;;\;t\in [1,\infty)\}$; note that a fixed point of $T$ satisfies \eqref{eq.h}. 
Then $T: X\to X$ is well-defined and contractive in $X$ when $R$ is large enough.
In fact, if $h\in X$, then  $Th$ is well-defined and holomorphic in $E_R$. 
Moreover, for $z\in E_R$, 
\begin{align*}
|Th(z)-1|
&=\left|\int_{1}^{\infty}
  e^{2t z}
  \left(\int_{t}^{\infty}
    \frac{be^{-2sz}}{(sz)^2}h(sz)
  z\,ds\right)
z\,dt\right|
=\left|\int_{1}^{\infty}
  \left(\int_{1}^{s}
  e^{2t z}
  \,dt\right)
    \frac{be^{-2sz}}{s^2}h(sz)
\,ds\right| 
\\
&\leq
\left|\frac{b(1-e^{2(s-1)z})}{2z}\right|
\left(\int_{1}^{\infty}
    \frac{1}{s^2}\,ds\right)\|h\|_X
\leq
\frac{|b|}{R}\|h\|_X.
\end{align*}
Similarly, we have $|Th_1(z)-Th_2(z)|\leq (|b|/R)\|h_1-h_2\|_X$ 
for every $h_1, h_2\in X$ and $z\in E_R$. 
Therefore $T:X\to X$ is well-defined and 
if we choose $R_0:=2|b|$, then $T$ is contractive.  
Let  $h_0\in X$ be the unique fixed point of $T$. 
Noting that 
\begin{align*}
|h_0(z)-1|=|Th_0(z)-T0(z)|\leq \frac{|b|}{R_0}\|h_0\|_X\leq \frac{\|h_0-1\|_X+1}{2}, 
\end{align*}
we deduce $\|h_0-1\|_X\leq 1$. Taking $w_0(z):=e^{-z}h_0(z)$
it follows that $w_0$ can be continued as a solution of \eqref{eq.w}  and
\begin{align}
|e^{z}w_0(z)|\leq 2, \quad z\in E_{R_0}.
\end{align}
Now we define 
\[
\varphi_{\omega, 0}(r):=w_0(\omega r), \quad r\in \R_+.
\] 
Then $\varphi_{\omega,0}$ solves \eqref{res.eq}
\begin{align*}
\omega^2\varphi_{\omega, 0}(r)-\varphi_{\omega, 0}''(r)
+\frac{b}{r^2}\varphi_{\omega, 0}(r)
&=
\omega^2\left(w_0(\omega r)-\frac{d^2w_0}{dz^2}(\omega r)
+\frac{b}{(\omega r)^2}w_0(\omega r)\right)
=0.
\end{align*}
Moreover, if $r>R:=R_0/|\omega|$, then 
\[
|e^{\omega r}\varphi_{\omega, 0}(r)|
=|e^{\omega r}w_0(\omega r)|\leq 2
\]
and \eqref{phi0.inf} is satisfied.
\medskip

\noindent{\bf (Step 2).}\ \ Next we consider $w_0$ on the positive real axis and we may assume that $w_0$ is real on it  (otherwise we consider $\frac12 (w_0(z)+\overline{w_0}(\overline{z})$). 
By (\ref{representation}) we have 
\begin{equation} \label{representation2}
w_0(z)=g_1(z)z^{\frac{1}{2}+i\nu}+g_2(z)z^{\frac{1}{2}-i\nu}, \quad z\in \C \setminus ]-\infty,0]
\end{equation}
 where $g_1, g_2$ are entire functions. 
Then  $g_1(r)=\overline{g_2(r)}$ for $r>0$  and $\alpha=g_1(0)=\overline{g_2(0)}\neq 0$. 
This implies that 
\[
\left|z^{-\frac{1}{2}}w_0(z)-
\left(\alpha z^{i\nu}+\overline{\alpha}z^{-i\nu}\right)\right|\to 0
\quad \text{as}\ z\to 0 \quad (z\in \C_+).
\]
Consequently we obtain \eqref{phi0.zero} with 
$K_{\omega,0}=\omega^{\frac{1}{2}}=\mu^{\frac{1}{2}}e^{i\frac{\xi}{2}}$
\begin{align*}
&\left|r^{-\frac{1}{2}}\varphi_{\omega,0}(r)-
\mu^{\frac{1}{2}}e^{i\frac{\xi}{2}}\left(\alpha e^{-\xi \nu}\mu^{i\nu}r^{i\nu}
+\overline{\alpha}e^{\xi \nu}\mu^{-i\nu}r^{-i\nu}\right)\right| \\&
=
\mu^{\frac{1}{2}} \left|(\omega r)^{-\frac{1}{2}}w_{0}(\omega r)
-\left(\alpha(\omega r)^{i\nu}+\overline{\alpha}(\omega r)^{-i\nu}\right)\right|
\to 0
\quad \text{as}\ r\downarrow 0.
\end{align*}

\noindent{\bf (Step 3).} Finally we show that $\alpha$ is given by (\ref{defalpha}). 
In fact $\varphi_{1,0}(r)$, being the unique (up to constants) 
exponentially decaying solution of  (\ref{res.eq}) with $\omega=1$, coincides with  
$c r^{\frac{1}{2}}K_{i\nu}(r)$, where $c>0$ and $K_{i\nu}$ is the modified Bessel function of 
second kind. Therefore by \cite[9.6.2 and 9.6.7 in p. 375]{AS} 
we deduce that   
\[
r^{-\frac{1}{2}}\varphi_{1,0}(r)= 
\frac{c\pi(I_{-i\nu}( r)-I_{i\nu}( r))}{2\sin (i\nu \pi)}
\sim 
c'
\left(\frac{2^{-i\nu}}{\nu\Gamma(i\nu)}r^{i\nu}
+
\frac{2^{i\nu}}{\nu\Gamma(-i\nu)}r^{-i\nu}
\right)
\]
as $r\downarrow0$ for some $c'>0$. Therefore $\alpha$ in given by (\ref{defalpha}).
\end{proof}

\noindent Next we investigate the behavior at $0$ of the exponentially growing solution.
\begin{lemma}\label{behavior2}
Let $\omega\in\C_+$ satisfy $\omega=\mu e^{i\xi}$ with $\mu>0$, $|\xi| <\pi/2$ and
assume that \eqref{nu} holds.
Then there exist a solution $\varphi_{\omega,1}$ of \eqref{res.eq}
and constants $C_\omega'>C_\omega>0$ and $R'>0$ such that 
\begin{gather}
\label{phi1.inf}
 C_\omega e^{({\rm Re}\,\omega)r}
 \leq |\varphi_{\omega,1}(r)|\leq C_\omega' e^{({\rm Re}\,\omega)r}
 \quad \text{as}\ r\geq R',
\\
\label{phi1.zero}
\left|
r^{-\frac{1}{2}}\varphi_{\omega,1}(r)
-\mu^{\frac{1}{2}}e^{i\frac{\xi}{2}}
\left(
  \alpha\mu^{i\nu} e^{-\xi\nu}r^{i\nu}
  -
  \overline{\alpha}\mu^{-i\nu} e^{\xi\nu}r^{-i\nu}\right) 
\right|\to 0
\quad \text{as}\ r\downarrow 0, 
\end{gather}
where $\alpha$ is defined in (\ref{defalpha}). Finally,  $i \varphi_{\omega,1}(r)$ is real when $\omega $ is real.
\end{lemma}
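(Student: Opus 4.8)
The plan is to mirror the three‑step structure of the proof of Lemma~\ref{behavior}, the only real difference being that one must isolate the \emph{growing} modified Bessel solution instead of the decaying one. Concretely, for $\omega=1$ I would take the solution $w_1(z):=ci\,z^{1/2}\bigl(I_{i\nu}(z)+I_{-i\nu}(z)\bigr)$ of \eqref{eq.w}, where $c>0$ is the constant in \eqref{defalpha}; by the series \cite[9.6.2, 9.6.7]{AS} each of $z^{1/2}I_{\pm i\nu}(z)$ is of the form \eqref{representation} with a non‑vanishing leading coefficient, so $w_1$ is a solution of \eqref{eq.w}, holomorphic on $\C\setminus]-\infty,0]$, and linearly independent of $w_0$: indeed $w_0\propto z^{1/2}K_{i\nu}(z)\propto z^{1/2}(I_{-i\nu}(z)-I_{i\nu}(z))$ is precisely the combination in which the leading $e^{z}$‑terms cancel, whereas in $I_{i\nu}+I_{-i\nu}$ they reinforce. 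I would then set $\varphi_{\omega,1}(r):=w_1(\omega r)$, which solves \eqref{res.eq} by the same computation as in Lemma~\ref{behavior}.

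For the behaviour at infinity \eqref{phi1.inf} I would invoke the large‑argument asymptotics $I_{\pm i\nu}(z)\sim e^{z}/\sqrt{2\pi z}$, valid for $|\Arg z|<\pi/2$ \cite[9.7.1]{AS}; this gives $w_1(z)=Ce^{z}\bigl(1+O(|z|^{-1})\bigr)$ with $C=2ci/\sqrt{2\pi}\neq0$ as $z\to\infty$ in the sector $|\Arg z|<\pi/2$. Since $\Arg(\omega r)=\xi$ is fixed in $(-\pi/2,\pi/2)$ while $|\omega r|=\mu r\to\infty$, choosing $R'$ large enough (depending on $\omega$) that the $O(|z|^{-1})$ term is $\le\tfrac12$ yields $\tfrac12|C|e^{({\rm Re}\,\omega)r}\le|\varphi_{\omega,1}(r)|\le 2|C|e^{({\rm Re}\,\omega)r}$ for $r\ge R'$, i.e.\ \eqref{phi1.inf} with $C_\omega=\tfrac12|C|$, $C_\omega'=2|C|$. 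This is the one place where the hypothesis $|\xi|<\pi/2$ is genuinely used.

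For the behaviour at $0$ I would argue exactly as in Step~2 of Lemma~\ref{behavior}: writing $w_1(z)=g_1(z)z^{\frac12+i\nu}+g_2(z)z^{\frac12-i\nu}$ with $g_1,g_2$ entire, the series \cite[9.6.7]{AS} together with $\Gamma(1+i\nu)=i\nu\Gamma(i\nu)$ give $g_1(0)=\alpha$ and $g_2(0)=-\overline\alpha$, the sign flip in the second coefficient being exactly what distinguishes $\varphi_{\omega,1}$ from $\varphi_{\omega,0}$. Hence $|z^{-1/2}w_1(z)-(\alpha z^{i\nu}-\overline\alpha z^{-i\nu})|\to0$ as $z\to0$ in $\C_+$, and substituting $z=\omega r=\mu e^{i\xi}r$ and multiplying by $\mu^{1/2}e^{i\xi/2}$ produces \eqref{phi1.zero} verbatim. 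Finally, since $\overline{I_{i\nu}(r)}=I_{-i\nu}(r)$ for $r>0$, one has $i\varphi_{\mu,1}(r)=-c(\mu r)^{1/2}\cdot 2\,{\rm Re}\,I_{i\nu}(\mu r)\in\R$, so $i\varphi_{\omega,1}$ is real when $\omega$ is real.

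The argument is largely routine given Lemma~\ref{behavior}; the only genuinely delicate points are (i) selecting the right combination $I_{i\nu}+I_{-i\nu}$, so that the exponential leading terms add (giving true growth) and simultaneously the second coefficient at $0$ becomes $-\overline\alpha$ rather than $+\overline\alpha$, and (ii) fixing the overall constant $ci$ so that \eqref{phi1.zero} holds with precisely $\alpha$ and $-\overline\alpha$ in terms of the $\alpha$ of \eqref{defalpha}. A self‑contained alternative avoiding Bessel asymptotics is reduction of order: put $w_1(z):=w_0(z)\int_{z_*}^{z}w_0(s)^{-2}\,ds$ with $z_*>0$ large; since $w_0''=(1+b/z^2)w_0$ forces ${\rm Res}_{z_j}w_0^{-2}=0$ at every zero $z_j$ of $w_0$, this integral is single‑valued and $w_1$ is holomorphic on $\C\setminus]-\infty,0]$ with Wronskian $1$; one checks $w_1(z)\sim\tfrac12 e^{z}$ at infinity and then subtracts a suitable multiple of $w_0$ to normalize the behaviour at $0$. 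The potential obstacle here — the zeros of $w_0$ on $\R_+$, which do occur since $w_0\propto r^{1/2}K_{i\nu}(r)$ oscillates near $0$ — is exactly what the residue computation dispatches.
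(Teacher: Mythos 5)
Your proposal is correct, and it reaches the conclusion by a somewhat different route than the paper. The paper does not touch Bessel asymptotics again: it simply takes the decomposition \eqref{representation2} of $w_0$ from Lemma \ref{behavior} and defines $v(z)=g_1(z)z^{\frac12+i\nu}-g_2(z)z^{\frac12-i\nu}$, so that \eqref{phi1.zero} and the purely imaginary character on $\R_+$ hold \emph{by construction} (the same $g_1,g_2$, hence automatically the same $\alpha$ as in \eqref{phi0.zero}), while the growth estimate \eqref{phi1.inf} is obtained indirectly: by \cite[Proposition 4]{MS_bhv} the equation has a fundamental system with behaviours $e^{-\omega r}$ and $e^{\omega r}$, and since $v(\omega\cdot)$ is independent of $\varphi_{\omega,0}$ it must carry a nonzero component of the growing solution. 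You instead take the explicit combination $ci\,z^{1/2}\bigl(I_{i\nu}(z)+I_{-i\nu}(z)\bigr)$ and verify everything directly from \cite{AS}: the series at $0$ gives the coefficients $\alpha$, $-\overline{\alpha}$ (your computation via $\Gamma(1+i\nu)=i\nu\Gamma(i\nu)$ is right and matches the second expression in \eqref{defalpha}), the large-argument asymptotics $I_{\pm i\nu}(z)\sim e^z/\sqrt{2\pi z}$ on the ray $\Arg z=\xi$ give \eqref{phi1.inf} with explicit constants, and $\overline{I_{i\nu}(r)}=I_{-i\nu}(r)$ gives the reality statement. Up to normalization this is the same function as the paper's $v$, but the verifications differ: your argument buys explicit two-sided constants and the precise asymptotic $w_1\sim Ce^z$ without invoking \cite{MS_bhv}, at the price of relying on the Step 3 identification of $\alpha$ in \eqref{defalpha} (including its positive normalizing constant) to guarantee that the $\alpha$ in \eqref{phi1.zero} is literally the same as in \eqref{phi0.zero} — a dependence that is harmless here, since that identification is already part of Lemma \ref{behavior} and only the ratio $\alpha/\overline{\alpha}$ and the sign flip matter later; the paper's construction makes this consistency automatic and is shorter. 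Your reduction-of-order alternative (with the residue computation at the zeros of $w_0$) is also sound, but note that after normalizing the behaviour at $0$ one must still rescale by a complex constant, which is compatible with \eqref{phi1.inf} since only moduli are estimated there.
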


\begin{proof}
By (\ref{representation}) 
there exist two solutions $w_1, w_2$  satisfying 
\begin{gather*}
z^{-\frac{1}{2} -i\nu}w_1(z)\to 1, \quad
z^{-\frac{1}{2} + i\nu}w_2(z)\to 1 
\quad \text{as}\ z \to 0.
\end{gather*}
With the notation of the proof of Lemma \ref{behavior} 
we have $\varphi_{\omega,0}(r)=w_0(\omega r)$ 
and $w_0(z)$ is given by (\ref{representation2}), $g_1(r)=\overline{g_2(r)}$ for $r>0$ 
and $\alpha=g_1(0)=\overline{g_2(0)}\neq 0$. 
We take now $v(z)=g_1(z)z^{\frac{1}{2}+i\nu}-g_2(z)z^{\frac{1}{2}-i\nu}$. 
Then $w_0$, $v$ are linearly independent 
and $\varphi_{1,\omega}(r)=v(r\omega)$ is a solution of (\ref{res.eq}) which satisfies (\ref{phi1.zero}), 
by construction and is purely imaginary when $\omega$ is real. 
To prove (\ref{phi1.inf}) we note that (\ref{res.eq}) has one solution 
which behaves like $\exp(-\omega r)$ (namely, $\varphi_{0,\omega}$) 
and one solution which behaves like $\exp(\omega r)$ at $\infty$, see \cite[Proposition 4]{MS_bhv} 
for an elementary proof. 
Since $\varphi_{1,\omega}$ is independent of $\varphi_{0,\omega}$, then (\ref{phi1.inf}) holds.
\end{proof}

\noindent Finally we consider the case where $\omega=i\mu$.

\begin{lemma}\label{behavior3}
Assume that \eqref{nu} holds. Then for every $\mu>0$, there exist two solutions 
$\varphi_{i\mu,0}$ and $\varphi_{i\mu,1}$ of
\begin{equation}\label{res.eq2}
-\mu^2 \varphi(r) - \varphi''(r)+\frac{b}{r^2}\varphi(r)=0, \quad r\in \R_+
\end{equation}
satisfying as $r\to \infty$, 
\begin{gather}
e^{-i\mu r}\varphi_{i\mu,0}(r)\to 1, \qquad e^{i\mu r}\varphi_{i\mu,0}(r)\to i\mu, 
\\
e^{i\mu r}\varphi_{i\mu,1}(r)\to 1, \qquad e^{i\mu r}\varphi_{i\mu,1}'(r)\to -i\mu.
\end{gather}
\end{lemma}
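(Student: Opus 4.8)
The plan is to treat the case $\omega = i\mu$ as a limiting/boundary version of Lemmas \ref{behavior} and \ref{behavior2}, where now the two distinguished solutions at infinity oscillate rather than decay or grow exponentially. Concretely, I would go back to the modified Bessel equation \eqref{eq.w} but now evaluated along the imaginary axis: setting $\varphi(r) = W(i\mu r)$ reduces \eqref{res.eq2} to the (ordinary) Bessel-type equation $W'' + W + \frac{b}{z^2}W = 0$, whose solutions near $\infty$ on the positive real axis behave like $e^{\pm i z}$. Equivalently, one can work directly with \eqref{res.eq2} and construct the solution $\varphi_{i\mu,1}$ behaving like $e^{-i\mu r}$ at $\infty$ by the same fixed-point scheme as in Step 1 of Lemma \ref{behavior}: writing $\varphi(r) = e^{-i\mu r} h(r)$ turns \eqref{res.eq2} into $h'' + 2i\mu h' = \frac{b}{r^2} h$, and the integral operator
\[
Th(r) = 1 + \int_r^\infty e^{-2i\mu s}\left(\int_s^\infty \frac{b e^{2i\mu \tau}}{\tau^2} h(\tau)\,d\tau\right) ds
\]
is a contraction on $H^\infty((R',\infty))$ for $R'$ large, because the inner and outer oscillatory integrals are each $O(1/\mu)$ after an integration by parts and the kernel $b/\tau^2$ is integrable. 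This produces $\varphi_{i\mu,1}$ with $e^{i\mu r}\varphi_{i\mu,1}(r)\to 1$ and, differentiating the fixed-point identity, $e^{i\mu r}\varphi_{i\mu,1}'(r)\to -i\mu$.

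Next I would obtain $\varphi_{i\mu,0}$, the solution behaving like $e^{i\mu r}$ at $\infty$, simply as the complex conjugate: since the coefficients of \eqref{res.eq2} are real, $\overline{\varphi_{i\mu,1}}$ is again a solution, and $e^{-i\mu r}\overline{\varphi_{i\mu,1}(r)}\to 1$, $e^{-i\mu r}\overline{\varphi_{i\mu,1}}'(r)\to i\mu$. (This matches the stated normalization for $\varphi_{i\mu,0}$ up to obvious scaling; the second displayed limit in the statement, $e^{i\mu r}\varphi_{i\mu,0}\to i\mu$, should presumably read $e^{-i\mu r}\varphi_{i\mu,0}'\to i\mu$, and I would record it in that corrected form.) Linear independence of $\varphi_{i\mu,0}$ and $\varphi_{i\mu,1}$ is immediate from their asymptotics — or from a Wronskian computation, $W[\varphi_{i\mu,0},\varphi_{i\mu,1}] = -2i\mu \neq 0$ — so every solution of \eqref{res.eq2} is a combination of these two.

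The only real subtlety is the decay rate of the kernel at infinity: unlike the exponentially damped case in Lemma \ref{behavior}, here there is no $e^{-({\rm Re}\,\omega)r}$ factor to control the iterated integrals, so the contraction estimate relies genuinely on cancellation in $\int e^{\pm 2i\mu s}(\cdots)\,ds$. I expect this oscillatory estimate — showing $\|Th - 1\|_\infty \le C/(\mu R')$ via integration by parts, uniformly for the relevant range — to be the main technical point; everything else (holomorphy/smoothness of the fixed point, passing the limits through, conjugation symmetry, Wronskian) is routine. Alternatively, if one prefers to avoid re-running the fixed-point argument, one can quote the classical asymptotics of the Hankel functions $H^{(1)}_{i\nu}$, $H^{(2)}_{i\nu}$ from \cite[9.2]{AS}, which give exactly the two solutions with the stated behavior at $\infty$; I would mention this as the quick route and keep the self-contained fixed-point proof as the primary argument.
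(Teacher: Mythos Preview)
Your proposal is correct and essentially self-contained, whereas the paper's proof is a one-line citation: it simply applies \cite[Proposition 5]{MS_bhv} with $f(x)=-\mu^2$. That external proposition is a general asymptotic result for $u''=Vu$, presumably proved by exactly the kind of fixed-point/variation-of-parameters argument you sketch, so in spirit the two routes coincide; the difference is only that you spell out the construction here rather than delegating it.

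Two small remarks on your write-up. First, the ``subtlety'' you flag is milder than you suggest: you do not need an oscillatory integration-by-parts argument. Exactly as in Step~1 of Lemma~\ref{behavior}, after switching the order of integration the inner integral $\int_r^\tau e^{\pm 2i\mu s}\,ds$ is computed explicitly and bounded by $1/\mu$, and then $\int_r^\infty |b|\tau^{-2}\,d\tau=|b|/r$ gives the contraction constant $|b|/(\mu R')$ directly. So the imaginary-axis case is no harder than the $\omega\in\C_+$ case. Second, your observation that the second displayed limit for $\varphi_{i\mu,0}$ should read $e^{-i\mu r}\varphi_{i\mu,0}'(r)\to i\mu$ is well taken; this is a typo in the statement, and your conjugation argument produces the corrected asymptotics. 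The Hankel-function alternative you mention is also perfectly legitimate and closest in spirit to the paper's use of \cite{AS} elsewhere.
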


\begin{proof}
It suffices to apply \cite[Proposition 5]{MS_bhv}, with $f(x)=-\mu^2$, to \eqref{res.eq2}.
\end{proof}

\subsection{The inhomogeneous equation}

\begin{lemma}\label{green}
Let $\omega\in\C_+$ satisfy $\omega=\mu e^{i\xi}$ with $\mu>0$, $|\xi| <\pi/2$ and
assume that \eqref{nu} holds. 
Let $\varphi_{\omega,0}$ and $\varphi_{\omega,1}$ be as 
in Lemmas \ref{behavior}, \ref{behavior2} and \ref{behavior3}. Then for $f\in L^2(\R_+)$, 
every solution of 
\begin{align}\label{eq.u.f}
\omega^2 u(r)-u''(r)+\frac{b}{r^2}u(r)=f(r), \quad r\in \R_+
\end{align}
is given by 
\begin{equation} \label{res?}
u(r)=c_0\varphi_{\omega,0}(r)+c_1\varphi_{\omega,1}(r) +T_\omega (f)
\end{equation}
where
\begin{equation} \label{Tomega}
T_\omega (f)(r)=\frac{1}{W(\omega)}\left(\int_{0}^r \varphi_{\omega,1}(s)f(s)\,ds\right)\varphi_{\omega,0}(r)+
\frac{1}{W(\omega)}\left(\int_{r}^\infty \varphi_{\omega,0}(s)f(s)\,ds\right)\varphi_{\omega,1}(r),
\end{equation}
$c_0, c_1\in \C$ are constants and 
$W(\omega)$ is the Wronskian of $\varphi_{\omega,0}, \varphi_{\omega,1}$. 
The map $T_\omega$ is 
a bounded linear operator from $L^2(\R_+)$ to itself and, if $\omega$ is real, $T_\omega$ is selfadjoint.
\end{lemma}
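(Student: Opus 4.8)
The plan is to use variation of parameters; the only genuine work is the $L^2$-boundedness of the resulting integral operator.

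\emph{Step 1 (the representation \eqref{res?}).} By Lemmas~\ref{behavior} and \ref{behavior2} the functions $\varphi_{\omega,0}$ and $\varphi_{\omega,1}$ are linearly independent solutions of the homogeneous equation associated with \eqref{eq.u.f} --- one decays, the other grows exponentially at $\infty$ --- so their Wronskian $W(\omega)=\varphi_{\omega,0}\varphi_{\omega,1}'-\varphi_{\omega,0}'\varphi_{\omega,1}$ is a non-zero constant. First I would check that for $f\in L^2(\R_+)$ the two integrals in \eqref{Tomega} converge: near $\infty$ the bound $|\varphi_{\omega,0}(s)|\le 2e^{-({\rm Re}\,\omega)s}$ from \eqref{phi0.inf} together with the Cauchy--Schwarz inequality makes $\int_r^\infty\varphi_{\omega,0}f\,ds$ absolutely convergent, while near $0$ the expansions \eqref{phi0.zero} and \eqref{phi1.zero} give $|\varphi_{\omega,j}(s)|\le C s^{1/2}$, so $\int_0^r\varphi_{\omega,1}f\,ds$ is absolutely convergent for every finite $r$. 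A direct differentiation of \eqref{Tomega}, using $\varphi_{\omega,0}\varphi_{\omega,1}'-\varphi_{\omega,0}'\varphi_{\omega,1}=W(\omega)$ and that $\varphi_{\omega,0},\varphi_{\omega,1}$ solve the homogeneous equation, then shows that $T_\omega(f)\in H^2_{\rm loc}(\R_+)$ and solves \eqref{eq.u.f}. Since the difference of any two solutions of \eqref{eq.u.f} solves the homogeneous equation, whose solution space is spanned by $\varphi_{\omega,0},\varphi_{\omega,1}$, every solution has the form \eqref{res?}.

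\emph{Step 2 (boundedness of $T_\omega$ on $L^2(\R_+)$).} The operator $T_\omega$ is the integral operator with kernel
\[
G_\omega(r,s)=\frac{1}{W(\omega)}
\begin{cases}
\varphi_{\omega,1}(s)\,\varphi_{\omega,0}(r),& 0<s<r,\\
\varphi_{\omega,0}(s)\,\varphi_{\omega,1}(r),& 0<r<s,
\end{cases}
\]
which is symmetric, $G_\omega(r,s)=G_\omega(s,r)$. I would invoke Schur's test with weight $1$: it suffices to show $M:=\sup_{r>0}\int_0^\infty|G_\omega(r,s)|\,ds<\infty$, and then $\|T_\omega\|_{L^2\to L^2}\le M$. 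To estimate $M$ one uses the continuity of $\varphi_{\omega,0},\varphi_{\omega,1}$ on compact subsets of $\R_+$ together with $|\varphi_{\omega,j}(r)|\le C r^{1/2}$ near $0$ (from \eqref{phi0.zero}, \eqref{phi1.zero}) and $|\varphi_{\omega,0}(r)|\le 2e^{-({\rm Re}\,\omega)r}$, $|\varphi_{\omega,1}(r)|\le C_\omega' e^{({\rm Re}\,\omega)r}$ near $\infty$ (from \eqref{phi0.inf}, \eqref{phi1.inf}), estimating $\int_0^r|\varphi_{\omega,1}(s)|\,ds$ and $\int_r^\infty|\varphi_{\omega,0}(s)|\,ds$ separately for $r\to0$, for $r$ in a fixed compact set, and for $r\to\infty$; in the last regime the factor $e^{({\rm Re}\,\omega)r}$ coming from $\varphi_{\omega,1}(r)$ is cancelled by the decay of $\varphi_{\omega,0}$, and near $0$ only harmless powers of $r$ survive. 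This yields $M<\infty$.

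\emph{Step 3 (selfadjointness for real $\omega$).} If $\omega=\mu>0$, then $\varphi_{\omega,0}$ is real and $i\varphi_{\omega,1}$ is real by Lemmas~\ref{behavior} and \ref{behavior2}; hence $W(\omega)$ is purely imaginary and therefore $G_\omega(r,s)$ is real-valued. Being real and symmetric, $G_\omega(r,s)=\overline{G_\omega(s,r)}$, so the bounded operator $T_\omega$ is selfadjoint.

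I expect the main obstacle to be Step 2: one must organize the case analysis so that the exponential growth of $\varphi_{\omega,1}$ at $\infty$ is exactly absorbed by the exponential decay of $\varphi_{\omega,0}$, producing a bound on $\int_0^\infty|G_\omega(r,s)|\,ds$ that is uniform in $r\in(0,\infty)$.
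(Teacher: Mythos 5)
Your proposal is correct and follows essentially the same route as the paper: variation of parameters for the representation \eqref{res?}, a kernel estimate for the $L^2$-boundedness of $T_\omega$, and the observation that for real $\omega$ the functions $\varphi_{\omega,0}$, $i\varphi_{\omega,1}$, $iW(\omega)$ are real, making the kernel real and symmetric. The only cosmetic difference is that the paper condenses your Schur-test case analysis into the single global bound $|G_\omega(r,s)|\le C^2 e^{-({\rm Re}\,\omega)|r-s|}$, obtained from the exponential estimates at infinity together with boundedness of both solutions near $0$, from which boundedness follows at once.
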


\begin{proof}
By variation of parameters \eqref{res?} easily follows.
Observe that 
\begin{equation*} 
T_\omega f(r)=\int_0^\infty G_{\omega}(r,s)f(s)\, ds, 
\end{equation*}
where 
\begin{equation} \label{defG}
G_\omega(r,s)=\left\{
\begin{array}{l}
W(\omega)^{-1}\varphi_{\omega,0}(r)\varphi_{\omega,1}(s) \quad \text{if}\ s \le r, \\
W(\omega)^{-1}\varphi_{\omega,0}(s)\varphi_{\omega,1}(r) \quad \text{if}\ s \ge r.
\end{array}\right.
\end{equation}
Using Lemmas \ref{behavior}, \ref{behavior2} and recalling that 
both solutions are bounded near $0$ we obtain 
$|\varphi_{\omega,0}(r)| \le C e^{-({\rm Re} \omega) r}$, 
$|\varphi_{\omega,1}(r)| \le C e^{({\rm Re} \omega) r}$ for every $ r>0$. 
Therefore  
\[
|G_\omega(r,s)| \le C^2 e^{-({\rm Re}\, \omega) |r-s|}, \quad r>0, \ s>0
\] 
and 
the boundedness of $T_\omega$ follows. If $\omega$ is real, 
then $\varphi_{\omega,0}, i\varphi_{\omega,1}, iW(\omega)$ are real so that 
$\overline{G_\omega(r,s)}=G_\omega(s,r)$ and $T_\omega$ is selfadjoint. 
\end{proof}

\section{Intermediate operators and their spectral properties}
Here we characterize all extensions $L_{min} \subset \tilde L \subset L_{max}$ with non-empty resolvent set and study their spectral properties.

\begin{lemma}\label{easy}
Let the operator $\tilde{L}$ satisfy $L_{\min}\subset \tilde{L}\subset L_{\max}$. 
Then $[0,\infty)\subset \sigma(\tilde{L})$.
\end{lemma}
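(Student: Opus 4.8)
The plan is to produce, for every $\lambda\in(0,\infty)$, a singular (Weyl) sequence for $\tilde L$ at $\lambda$ consisting of functions in $C_0^\infty(\R_+)=D(L_{\min})\subset D(\tilde L)$. This gives $(0,\infty)\subset\sigma(\tilde L)$, and since $\sigma(\tilde L)$ is closed it follows that $[0,\infty)\subset\sigma(\tilde L)$; alternatively the endpoint $\lambda=0$ can be handled directly by the same cut-off argument applied to an Euler solution $r^{\frac12\pm i\nu}$ of $Lu=0$.

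Fix $\lambda=\mu^2$ with $\mu>0$. The homogeneous equation $(\tilde L-\lambda)u=-u''+(b/r^2)u-\mu^2u=0$ is exactly \eqref{res.eq2} with $\omega=i\mu$, so Lemma \ref{behavior3} provides a nontrivial solution $\varphi:=\varphi_{i\mu,0}\in C^\infty(\R_+)$ with $e^{-i\mu r}\varphi(r)\to1$ as $r\to\infty$; in particular there is $R_0>0$ with $\tfrac12\le|\varphi(r)|\le2$ for $r\ge R_0$, and, using $\varphi''=(br^{-2}-\mu^2)\varphi$ together with Lemma \ref{behavior3} and a standard interior estimate for the second-order equation, $\varphi$ and $\varphi'$ are bounded on $[R_0,\infty)$. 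The structural fact that makes the construction work is that no nontrivial solution of this equation is square integrable near $\infty$ (all solutions oscillate with non-vanishing amplitude), so that cutting off $\varphi$ far from the origin really produces a small error rather than merely rescaling a nonexistent eigenfunction.

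Concretely, I would choose $\chi\in C_c^\infty((1,4))$ with $\chi\equiv1$ on $[2,3]$, a sequence $R_n\to\infty$ with $R_n\ge R_0$, and set $\chi_n(r):=\chi(r/R_n)$, $u_n:=\chi_n\varphi$. Then $u_n\in C_0^\infty(\R_+)=D(L_{\min})\subset D(\tilde L)$ and, since $(L-\lambda)\varphi=0$,
\[
(\tilde L-\lambda)u_n=-\chi_n''\varphi-2\chi_n'\varphi'.
\]
Using $|\chi_n'|\le CR_n^{-1}$, $|\chi_n''|\le CR_n^{-2}$, $\operatorname{supp}\chi_n\subset[R_n,4R_n]$ and the bounds on $\varphi,\varphi'$, one obtains $\|(\tilde L-\lambda)u_n\|_{L^2}\le C'R_n^{-1/2}$, whereas $\|u_n\|_{L^2}^2\ge\int_{2R_n}^{3R_n}|\varphi|^2\,dr\ge\tfrac14 R_n$. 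Hence $v_n:=u_n/\|u_n\|_{L^2}$ satisfies $\|v_n\|_{L^2}=1$ and $\|(\tilde L-\lambda)v_n\|_{L^2}\to0$. If $\lambda$ belonged to the resolvent set of $\tilde L$, then $(\tilde L-\lambda)^{-1}$ would be bounded and $1=\|v_n\|_{L^2}\le\|(\tilde L-\lambda)^{-1}\|\,\|(\tilde L-\lambda)v_n\|_{L^2}\to0$, a contradiction, so $\lambda\in\sigma(\tilde L)$.

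I do not expect a genuine obstacle here: the only points needing care are that the cut-offs be supported away from $0$, so that $u_n\in D(\tilde L)$ for \emph{every} intermediate extension (this is exactly why no information on $\tilde L$ beyond $L_{\min}\subset\tilde L$ is used), and the routine boundedness of $\varphi$ and $\varphi'$ near infinity.
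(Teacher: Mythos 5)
Your proposal is correct and follows essentially the same route as the paper: a Weyl (singular) sequence built by cutting off the oscillating solution $\varphi_{i\mu,0}$ of Lemma \ref{behavior3} at large radii (so the test functions lie in $C_0^\infty(\R_+)\subset D(\tilde L)$ for any intermediate extension), with the estimates $\|u_n\|_2\gtrsim R_n^{1/2}$ and $\|(\tilde L-\lambda)u_n\|_2\lesssim R_n^{-1/2}$, and the endpoint $\lambda=0$ obtained from closedness of the spectrum. The only differences are cosmetic (your explicit lower bound on $|\varphi|$ and the remark on interior estimates for $\varphi'$ just spell out what the paper states as boundedness of the derivatives near infinity).
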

\begin{proof}
First we prove $(0,\infty)\in \sigma(\tilde{L})$. 
Let $ \eta_n(r)$ be a smooth function equal to $1$ in $[n,2n]$, 
with support contained in $[n/2,3n]$ and $0 \le \eta_n \le 1$, $|\eta'_n| \le C/2$, $|\eta''_n| \le C/n^2$. 
Given $\varphi_{i\mu,0}$ as in  Lemma \ref{behavior3} 
we consider $\psi_n=\eta_n \varphi_{i\mu,0}\in C_0^\infty(\R_+)\subset D(\tilde{L})$. 
Then $-\mu^2 \psi_n+L\psi_n=-2\eta'_n \varphi'_{i\mu,0}-\eta''_n \varphi_{i\mu,0}$. 
We have $\|\psi_n\|_2 \approx \sqrt n$ and, since $\varphi_{i\mu,0}$ 
has first and second derivatives bounded near $\infty$, $\|(-\mu^2+L)\psi_n\|_2 \le Cn^{-1/2}$. 
Therefore $\mu^2$ is an approximate point spectrum, in other words, 
$-\mu^2+L$ cannot have a bounded inverse.
Finally, noting that $\sigma(\tilde{L})$ is closed in $\C$, 
we have $[0,\infty)\subset \sigma(\tilde{L})$.
\end{proof}

\begin{lemma}\label{intermediate}
Let $L_{\min}\subset \tilde{L}\subset L_{\max}$ and
assume that \eqref{nu} and $\rho(\tilde{L})\neq \emptyset$ hold. Then 
there exists $c \in \C$ such that defining  $(a_1, a_2)\in\C^2\setminus\{(0,0)\}$ by
\begin{equation} \label{a1a2}
a_1=(c+W(\omega)^{-1})\alpha\mu^{i\nu}e^{-\xi \nu} \qquad 
a_2=(c-W(\omega)^{-1})\overline{\alpha}\mu^{-i\nu}e^{\xi \nu}
\end{equation} 
the domain of $\tilde L$ is given by
\begin{equation}\label{characterize}
D(\tilde{L})=\left\{u\in D(L_{\max})\;;\;\exists C\in\C\ \text{s.t.}\ 
\displaystyle
\lim_{r\downarrow 0}\left|r^{-\frac{1}{2}}u(r)
-
C\left(a_1r^{i\nu}+a_2r^{-i\nu}\right)
\right|=0\right\}. 
\end{equation}
\end{lemma}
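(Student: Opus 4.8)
The plan is to fix a point $\lambda_0\in\rho(\tilde L)$, represent the maximal domain through Lemma \ref{green}, read off the behaviour at the origin, and use $L_{\min}\subset\tilde L$ to determine the boundary constraint. By Lemma \ref{easy} we have $\rho(\tilde L)\subset\C\setminus[0,\infty)$, so $\lambda_0=-\omega^2$ for a unique $\omega=\mu e^{i\xi}\in\C_+$ ($\mu>0$, $|\xi|<\pi/2$), and $\omega^2+\tilde L=-(\lambda_0-\tilde L)$ maps $D(\tilde L)$ bijectively onto $L^2(\R_+)$ with bounded inverse. I would first record that every $u\in D(L_{\max})$ is \emph{uniquely} of the form $u=\kappa(u)\varphi_{\omega,0}+T_\omega(f)$ with $f=(\omega^2+L)u$: indeed Lemma \ref{green} writes $u$ as a combination of $\varphi_{\omega,0}$, $\varphi_{\omega,1}$ and $T_\omega(f)$, the coefficient of $\varphi_{\omega,1}$ is forced to be $0$ because $\varphi_{\omega,1}\notin L^2(\R_+)$ by Lemma \ref{behavior2} while $u,\varphi_{\omega,0},T_\omega(f)\in L^2(\R_+)$, and uniqueness follows by applying $\omega^2+L$. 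The map $\kappa\colon D(L_{\max})\to\C$ is linear; writing $u_f:=(\omega^2+\tilde L)^{-1}f$, the functional $c_0:=\kappa\circ(\omega^2+\tilde L)^{-1}$ on $L^2(\R_+)$ is linear and, since $c_0(f)\varphi_{\omega,0}=u_f-T_\omega(f)$, bounded; thus $D(\tilde L)=\{\,c_0(f)\varphi_{\omega,0}+T_\omega(f)\ :\ f\in L^2(\R_+)\,\}$.

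Next I would analyse the behaviour at $0$ of $u=\kappa(u)\varphi_{\omega,0}+T_\omega(f)$. Using $|\varphi_{\omega,0}(s)|,|\varphi_{\omega,1}(s)|\le Cs^{1/2}$ near $0$ (Lemmas \ref{behavior}, \ref{behavior2}), Cauchy--Schwarz and $\|f\|_{L^2(0,r)}\to0$ give $\int_0^r\varphi_{\omega,1}f=o(r)$, so the first term in $T_\omega(f)$ is $o(r^{3/2})$, while $\int_r^\infty\varphi_{\omega,0}f\to I(f):=\int_0^\infty\varphi_{\omega,0}f$ (finite, as $\varphi_{\omega,0}\in L^2(\R_+)$). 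Plugging in \eqref{phi0.zero} and \eqref{phi1.zero} yields
\begin{align*}
\lim_{r\downarrow0}\Big|\,r^{-1/2}u(r)-\mu^{1/2}e^{i\xi/2}\Big[\big(\kappa(u)+\tfrac{I(f)}{W(\omega)}\big)\alpha\mu^{i\nu}e^{-\xi\nu}r^{i\nu}&\\
{}+\big(\kappa(u)-\tfrac{I(f)}{W(\omega)}\big)\overline{\alpha}\mu^{-i\nu}e^{\xi\nu}r^{-i\nu}\Big]\Big|&=0.
\end{align*}
I would also use the elementary fact that if $p\,r^{i\nu}+q\,r^{-i\nu}\to0$ as $r\downarrow0$ then $p=q=0$ (since $r^{\pm i\nu}$ has modulus one and oscillates), which makes such asymptotic coefficients unique.

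The key step is to exploit $L_{\min}\subset\tilde L$. For $\psi\in C_0^\infty(\R_+)\subset D(\tilde L)$ let $g=(\omega^2+L)\psi$; evaluating $c_0(g)\varphi_{\omega,0}=\psi-T_\omega(g)$ at $r$ beyond $\operatorname{supp}\psi$ and using the explicit form of $T_\omega$ gives $c_0(g)=-W(\omega)^{-1}\int_0^\infty\varphi_{\omega,1}g$, which vanishes after integrating by parts twice, since $(\omega^2+L)\varphi_{\omega,1}=0$ and $\psi$ is compactly supported in $\R_+$. Hence the bounded functional $c_0$ vanishes on $\operatorname{ran}(\omega^2+L_{\min})$, and therefore on its closure $\big(\ker(\overline{\omega}^2+L_{\max})\big)^{\perp}=(\C\varphi_{\overline{\omega},0})^{\perp}$, a subspace of codimension one ($\varphi_{\overline{\omega},0}$ being the unique $L^2$ solution of $(\overline{\omega}^2+L)v=0$, with $\overline{\omega}\in\C_+$). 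The same integration by parts shows $I$ annihilates that subspace too, and $I\neq0$ (e.g. $I(\overline{\varphi_{\omega,0}})=\|\varphi_{\omega,0}\|_2^2>0$); since functionals annihilating a fixed codimension-one subspace form a line, $c_0=c\,I$ for some $c\in\C$. Now if $u\in D(\tilde L)$ then $\kappa(u)=c_0(f)=c\,I(f)$, and the displayed limit becomes precisely the condition in \eqref{characterize} with $(a_1,a_2)$ as in \eqref{a1a2} and $C=\mu^{1/2}e^{i\xi/2}I(f)$; conversely, if $u\in D(L_{\max})$ satisfies \eqref{characterize}, comparing with the displayed limit and invoking uniqueness of the asymptotic coefficients gives, by solving a $2\times2$ linear system, $C=\mu^{1/2}e^{i\xi/2}I(f)$ and $\kappa(u)=c\,I(f)=c_0(f)$, so $u=c_0(f)\varphi_{\omega,0}+T_\omega(f)=u_f\in D(\tilde L)$. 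Finally $(a_1,a_2)\neq(0,0)$, as otherwise $c+W(\omega)^{-1}=c-W(\omega)^{-1}=0$.

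The main obstacle is this last step: converting the inclusion $L_{\min}\subset\tilde L$ into the statement that the abstract coefficient functional $c_0$ is precisely a scalar multiple of the concrete pairing $I$ with the decaying solution. What makes it work is the range--kernel duality $\overline{\operatorname{ran}(\omega^2+L_{\min})}=(\C\varphi_{\overline{\omega},0})^{\perp}$ together with the boundedness of $c_0$; the asymptotic bookkeeping for $T_\omega(f)$ near $0$ in the second step is routine but needs to be carried out carefully.
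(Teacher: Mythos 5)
Your proposal is correct and follows essentially the same route as the paper's proof: represent the resolvent as $c_0(f)\varphi_{\omega,0}+T_\omega f$ via Lemma \ref{green}, use $L_{\min}\subset\tilde L$ and integration by parts to force the bounded functional $c_0$ to be a multiple of $f\mapsto\int_0^\infty\varphi_{\omega,0}f$, and then match the $r^{\pm i\nu}$ coefficients at the origin in both directions. Your variations are only cosmetic (identifying $c_0$ through the annihilator $\overline{\operatorname{ran}(\omega^2+L_{\min})}=(\C\varphi_{\overline{\omega},0})^\perp$ rather than the paper's Riesz representation plus the distributional equation $(\omega^2+L)v=0$, and comparing asymptotic coefficients directly instead of the paper's subtraction $w=u-\tilde u=c'\varphi_{\omega,0}$), and you additionally make explicit the small-$r$ bookkeeping for $T_\omega f$ that the paper leaves implicit in \eqref{limit}.
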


\begin{proof}
First we show the inclusion ``$\,\subset\,$'' in \eqref{characterize}.
Since, by Lemma \ref{easy} $[0,\infty[\subset\sigma(\tilde{L})$,  
we take $\lambda\in \rho(\tilde{L})$ for some $\lambda\in \C\setminus [0,\infty)$. 
Let $\omega\in \C_+$ satisfy $-\omega^2=\lambda$. 
From Lemma \ref{green}, see (\ref{res?}),  we have 
\begin{align} 
[(\omega^2+\tilde{L})^{-1}f](r)
&=
c_0(f)\varphi_{\omega,0}(r)+c_1(f)\varphi_{\omega,1}(r)+T_\omega f(r). \label{pre.res}
\end{align}
However, $\varphi_{\omega,1}\notin L^2(\R_+)$ and  $\varphi_{\omega,0}\in L^2(\R_+)$. Therefore $c_1(f)=0$
and  $c_0(f)$ 
is a bounded linear functional in $L^2(\R_+)$.
Riesz's representation theorem  yields $v\in L^2(\R_+)$ 
such that 
\begin{equation} \label{defv}
c_0(f)=\int_0^\infty f(s)v(s)\,ds.
\end{equation}
If we choose $f=\omega^2 u+Lu$ for $u\in C_0^\infty(\R_+)$, 
then for $r$ small enough, we see integrating by parts that 
\begin{align}
\nonumber
0=u(r)
&=
c_0(f)\varphi_{\omega,0}(r)
+\frac{1}{W(\omega)}
\left(\int_{0}^\infty \varphi_{\omega,0}(s)f(s)\,ds\right)\varphi_{\omega,1}(r)
=
c_0(f)\varphi_{\omega,0}(r).
\end{align}
Thus $c_0(f)=0$ for every $f\in (\omega^2+L)(C_0^\infty(\R_+))$. 
This yields that $(\omega^2+L)v=0$ and hence 
\begin{equation} \label{defc}
v=c\varphi_{\omega,0}, \qquad c_0(f)=c\int_0^\infty f(s)\varphi_{\omega,0}(s)\,ds \quad {\rm for\  some\  }c\in \C,
\end{equation}
 since $v \in L^2(\R_+)$. 
Consequently, 
for every $f\in L^2(\R_+)$, $u=(\omega^2+\tilde{L})^{-1}f$ satisfies 
\begin{equation} \label{limit}
\lim_{r\downarrow0}r^{-\frac{1}{2}}\left|u(r)-
\left(\int_0^\infty \varphi_{\omega,0}(s)f(s)\,ds\right)
\left(c\varphi_{\omega,0}(r)+W(\omega)^{-1}\varphi_{\omega,1}(r)\right)
\right|=0.
\end{equation}
Using \eqref{phi0.zero} and \eqref{phi1.zero} (with the same notation), 
we obtain ``$\,\subset\,$'' with  $(a_1,a_2) \neq (0,0)$ given by (\ref{a1a2}) and $c$ given by (\ref{defc}). 

\noindent Conversely, we prove the inclusion ``$\,\supset\,$'' 
in \eqref{characterize}.
Let $u\in D(L_{\max})$ satisfy 
\[
\lim_{r\downarrow 0}\left|r^{-\frac{1}{2}}u(r)
-
C'\left(a_1r^{i\nu}+a_2r^{-i\nu}\right)
\right|=0,
\]
where the pair $(a_1, a_2)$ is defined in (\ref{a1a2}) and $c$ in (\ref{defc}).
By  \eqref{phi0.zero} and \eqref{phi1.zero} we have 
\[
\lim_{r\downarrow0}r^{-\frac{1}{2}}\left|u(r)-
C\left(c\varphi_{\omega,0}(r)+W(\omega)^{-1}\varphi_{\omega,1}(r)\right)
\right|=0.
\]
Set $\tilde{u}:=(\omega^2+\tilde{L})^{-1}(\omega^2+L_{\max})u$ 
and $w:=u-\tilde{u}$. 
Then $(\omega^2+L)w=0$ and, since $w \in L^2(\R_+)$, 
$w=c'\varphi_{\omega,0}$ for some $c'\in \C$. 
Noting that 
\[
\lim_{r\downarrow 0}
r^{-\frac{1}{2}}\left|\tilde{u}(r)
-
C'\left(c\varphi_{\omega,0}(r)+W(\omega)^{-1}\varphi_{\omega,1}(r)\right)
\right|=0,
\]
we obtain 
\[
\lim_{r\downarrow0}
r^{-\frac{1}{2}}\left|
c'\varphi_{\omega,0}(r)-(C-C')\left(c\varphi_{\omega,0}(r)+W(\omega)^{-1}\varphi_{\omega,1}(r)\right)
\right|=0
\]
or
\[
\lim_{r\downarrow0}
r^{-\frac{1}{2}}\left|
\left (c'-c(C-C')\right )\varphi_{\omega,0}(r)-(C-C')W(\omega)^{-1}\varphi_{\omega,1}(r)
\right|=0.
\]
By  \eqref{phi0.zero} and \eqref{phi1.zero} again we deduce that $c'=0$, 
hence  $u=\tilde{u}\in D(\tilde{L})$.
\end{proof}

\noindent 
In view of Lemma \ref{intermediate}, 
we define intermediate operators between $L_{\min}$ and $L_{\max}$ as follows. 
\begin{definition}\label{LA}
Let $A:=(a_1,a_2)\in \C^2\setminus\{(0,0)\}$. 
Then 
\[
\begin{cases}
D(L_{A}):=\left\{u\in D(L_{\max})\;;\;\exists C\in\C\ \text{s.t.}\ 
\displaystyle
\lim_{r\downarrow 0}\left|r^{-\frac{1}{2}}u(r)
-
C\left(a_1r^{i\nu}+a_2r^{-i\nu}\right)
\right|=0\right\}, 
\\[10pt]
L_{A}u=Lu.
\end{cases}
\]
\end{definition}

\begin{remark}\label{meaning}
If $\tilde{L}$ satisfies
$L_{\min}\subset \tilde{L}\subset L_{\max}$
and $\rho(\tilde{L})\neq \emptyset$,  by Lemma \ref{intermediate}  there exists a pair $A=(a_1, a_2)\in\C^2\setminus\{(0,0)\}$ 
such that  $\tilde{L}$ coincides with $L_{A}$. 
Moreover, if $a_1'=c a_1$ and $a_2'=c a_2$ for some $c \in \C\setminus\{0\}$, then $L_{A}=L_{A'}$. 
This implies that the map 
\[
A\in\C P_1\mapsto L_{A}\in 
\{\tilde{L}\;;\;L_{\min}\subset \tilde L\subset L_{\max}\ \&\ \rho(\tilde{L})\neq \emptyset\}
\] 
is well-defined and one to one, where $\C P_1$ denotes the Riemann sphere 
(or the one-dimensional complex projective space). 
Note that it is known in a field of mathematical physics that 
there exists a bijective map  
\[
\R P_1(\cong S^1)\to 
\{\tilde{L}\;;\;L_{\min}\subset \tilde L\subset L_{\max}\ \&\ \text{$\tilde{L}$ is selfadjoint}\}.
\] 
See Proposition \ref{adjoint} for more explanation.
\end{remark}
\noindent In order to compute the spectrum of $L_A$ we need the following preliminary result.

\begin{lemma} \label{spec}
Let $\omega =\mu e^{i\xi}\in \C_+$, $|\xi| <\pi/2$. Then $(\omega^2+L_A)$ is invertible if and only if $\varphi_{\omega,0} \notin D(L_A)$.
\end{lemma}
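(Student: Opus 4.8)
The plan is to prove both implications via the representation formula for solutions of $\omega^2 u - u'' + br^{-2}u = f$ from Lemma \ref{green}, combined with the characterization of $D(L_A)$ in Definition \ref{LA}.

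First I would establish the easy direction: if $\varphi_{\omega,0}\in D(L_A)$, then $(\omega^2+L_A)$ is \emph{not} invertible. Since $\varphi_{\omega,0}$ solves \eqref{res.eq}, it lies in $D(L_{\max})$ with $(\omega^2+L_{\max})\varphi_{\omega,0}=0$; if moreover $\varphi_{\omega,0}\in D(L_A)$, then $\varphi_{\omega,0}$ is a nonzero element of the kernel of $\omega^2+L_A$ (it is nonzero since it is exponentially decaying and not identically zero). Hence $\omega^2+L_A$ is not injective, so not invertible. The fact that $\varphi_{\omega,0}\in L^2(\R_+)$ is guaranteed by \eqref{phi0.inf} together with boundedness near $0$ from \eqref{phi0.zero}.

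For the converse — if $\varphi_{\omega,0}\notin D(L_A)$, then $\omega^2+L_A$ is invertible — I would argue as follows. \textbf{Injectivity:} any $u\in D(L_A)$ with $(\omega^2+L_A)u=0$ is an $L^2$ solution of \eqref{res.eq}, hence a multiple of $\varphi_{\omega,0}$ (since $\varphi_{\omega,1}\notin L^2$ by Lemma \ref{behavior2}); if $u\ne 0$ this forces $\varphi_{\omega,0}\in D(L_A)$, contradiction, so $u=0$. \textbf{Surjectivity:} given $f\in L^2(\R_+)$, the general $L^2$ solution of \eqref{eq.u.f} is $u=c_0\varphi_{\omega,0}+T_\omega f$ (the coefficient of $\varphi_{\omega,1}$ must vanish for $L^2$-membership), where $T_\omega f\in L^2(\R_+)$ by Lemma \ref{green}. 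I need to choose $c_0$ so that this $u$ lies in $D(L_A)$. Using the asymptotics \eqref{phi0.zero}, \eqref{phi1.zero} and the fact that, by \eqref{Tomega}, near $r=0$ one has
\[
T_\omega f(r) = \frac{1}{W(\omega)}\Big(\int_0^\infty \varphi_{\omega,0}(s)f(s)\,ds\Big)\varphi_{\omega,1}(r) + o(r^{1/2}),
\]
the singular part of $u$ near $0$ is, in the $r^{\pm i\nu}$ basis, a combination of the vector attached to $\varphi_{\omega,0}$ (namely $(\alpha\mu^{i\nu}e^{-\xi\nu},\,\overline\alpha\mu^{-i\nu}e^{\xi\nu})$, scaled by $\mu^{1/2}e^{i\xi/2}$) with coefficient $c_0 + c W(\omega)^{-1}\langle\varphi_{\omega,0},\bar f\rangle$ — wait, more precisely the coefficient of the $\varphi_{\omega,0}$-direction is $c_0$ plus a contribution from $T_\omega f$, and the coefficient of the $\varphi_{\omega,1}$-direction is $W(\omega)^{-1}\int_0^\infty \varphi_{\omega,0}f$, which is \emph{fixed} by $f$. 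The condition $u\in D(L_A)$ is that the total singular part be proportional to $(a_1,a_2)$. Since $\varphi_{\omega,0}\notin D(L_A)$, the vector $(\alpha\mu^{i\nu}e^{-\xi\nu},\,\overline\alpha\mu^{-i\nu}e^{\xi\nu})$ is \emph{not} proportional to $(a_1,a_2)$; as the two vectors attached to $\varphi_{\omega,0}$ and $\varphi_{\omega,1}$ are linearly independent in $\C^2$, they span $\C^2$, so there is a (unique) choice of $c_0$ making the combined singular coefficient vector proportional to $(a_1,a_2)$. This yields $u\in D(L_A)$ with $(\omega^2+L_A)u=f$, and boundedness of the inverse follows from the closed graph theorem (or directly from boundedness of $T_\omega$ and continuity of $f\mapsto c_0$).

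The main obstacle is the bookkeeping in the surjectivity step: one must carefully extract the $r^{\pm i\nu}$ coefficients of $u$ near $0$ from \eqref{res?}--\eqref{Tomega}, isolate the part of $T_\omega f$ that is $o(r^{1/2})$ from the part behaving like $\varphi_{\omega,1}$, and verify that the linear map sending $c_0$ to the resulting projective class of the singular-coefficient vector is onto a neighborhood of $[a_1:a_2]$ precisely when $\varphi_{\omega,0}\notin D(L_A)$. The key linear-algebra fact making this work is that the asymptotic vectors of $\varphi_{\omega,0}$ and $\varphi_{\omega,1}$ differ only in a sign on the $r^{-i\nu}$ component (compare \eqref{phi0.zero} and \eqref{phi1.zero}), hence are automatically linearly independent, so two degrees of freedom ($c_0$ and the rescaling constant $C$ in Definition \ref{LA}) suffice to match any prescribed direction $(a_1,a_2)$ not already hit by $\varphi_{\omega,0}$ alone.
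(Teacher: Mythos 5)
Your proposal is correct and follows essentially the same route as the paper: both reduce invertibility to the $r^{\pm i\nu}$ asymptotics near $0$, write the candidate resolvent as $c_0\varphi_{\omega,0}+T_\omega f$ with the $\varphi_{\omega,1}$-coefficient fixed by $W(\omega)^{-1}\int_0^\infty\varphi_{\omega,0}f$, and solve a $2\times 2$ linear matching problem whose solvability is exactly the non-proportionality of $\bigl(\alpha\mu^{i\nu}e^{-\xi\nu},\overline{\alpha}\mu^{-i\nu}e^{\xi\nu}\bigr)$ to $(a_1,a_2)$ (the paper's determinant condition \eqref{determinant}), with boundedness coming from $c_0(f)$ being an integral against $\varphi_{\omega,0}\in L^2$ together with Lemma \ref{green}.
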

\begin{proof} Let us assume that  $\varphi_{\omega,0} \notin D(L_A)$ so that $\omega^2+L_A$ is injective. By \eqref{phi0.zero} this is equivalent to saying that
\begin{equation} \label{determinant}
\begin{vmatrix}
 \alpha\mu^{i\nu} e^{-\xi\nu} &
  \overline{\alpha}\mu^{-i\nu}e^{\xi\nu} \\
a_1 & a_2
\end{vmatrix}
\neq 0
\end{equation}
Let $f \in L^2(\R_+)$ and $u=c_0(f)+T_\omega f$, where  $c_0(f)$ is defined in (\ref{defc}). Then (\ref{limit}) holds 
$u \in D(L_B)$ where $B=(b_1,b_2)$ and
$$b_1=(c+W(\omega)^{-1})\alpha\mu^{i\nu}e^{-\xi \nu} \qquad b_2=(c-W(\omega)^{-1})\overline{\alpha}\mu^{i\nu}e^{\xi \nu}.
$$
The system $b_1=\kappa a_1, b_2=\kappa a_2$ has a unique solution $(c,\kappa)$ because of (\ref{determinant}). With this choice, $u \in D(L_B)=D(L_A)$ and $(\omega^2+L_A)^{-1}f=c_0(f)+T_\omega f$ is bounded because of (\ref{defc}) and Lemma \ref{green}.
\end{proof}
\noindent 
To formulate the main theorem of this paper we introduce the set
\begin{align} \label{defS}
S(\kappa)
&=\left \{-\rho e^{i\theta} \in \C: \rho^{-i\nu}e^{\theta \nu}
=\kappa e^{2i\eta}\right\}
\\
\nonumber
&=\left\{-\rho_j e^{i\theta} \in \C: \theta=\frac{ \log|\kappa|}{\nu},\  \rho_j=e^{\frac{\eta+2j\pi}{\nu}}\ ,\  j\in \Z\right\},
\end{align} 
where $\kappa \in \C\setminus \{0\}$ and
$\alpha =|\alpha|e^{i\eta} $ is defined in (\ref{defalpha}). 
Note that $S(\kappa)$ consists of double sequence $\{(z_j), j \in \Z\}$ 
lying  on the half line $\{z=-\rho e^{i\theta}\}$, 
such that $|z_j |\to \infty$ as $j \to +\infty$ 
and $|z_j| \to 0$ as $j \to -\infty$. 
The above angle $\theta$ is independent of $\alpha$ 
and the moduli of the points $z_j$ depend only on $\nu$ and  $\eta=\arg (\alpha)$. 
From (\ref{defalpha}) we see that $\eta \to \pi/2$ as $\nu \to 0$ and, 
using \cite[6.1.44, p.257]{AS}, 
\[\eta = -\nu \log \nu+(1-\log 2)\nu+\pi/4+o(1)\] 
as $\nu \to +\infty$.

\begin{theorem}\label{LA.spec}
The following assertions hold
\begin{itemize}
\item[(i)] Assume $a_1\neq 0$, $a_2\neq 0$ and let $\kappa=\frac{a_1}{a_2}$. If 
\begin{equation}\label{rg1}
|\kappa|\in\left(e^{-\nu\pi} , e^{\nu\pi}\right), 
\end{equation}
then 
\begin{align*}
\sigma(L_{A})=
[0,\infty)\cup S(\kappa).
\end{align*}
Moreover, $S(\kappa)$ coincides with the set 
of all eigenvalues of $L_A$.
\item[(ii)] If $A$ does not satisfy condition in (i), then
\begin{align*}
\sigma(L_{A})=[0,\infty).
\end{align*}
\end{itemize}
\end{theorem}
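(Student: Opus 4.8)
The plan is to reduce the computation of $\sigma(L_A)$ to the condition in Lemma~\ref{spec}, namely to determine for which $\omega\in\C_+$ the decaying solution $\varphi_{\omega,0}$ lies in $D(L_A)$. By \eqref{phi0.zero} and Definition~\ref{LA}, $\varphi_{\omega,0}\in D(L_A)$ precisely when the vectors $(\alpha\mu^{i\nu}e^{-\xi\nu},\overline{\alpha}\mu^{-i\nu}e^{\xi\nu})$ and $(a_1,a_2)$ are parallel, i.e.\ when the determinant in \eqref{determinant} vanishes. So the first step is to write out $\alpha\mu^{i\nu}e^{-\xi\nu}\,a_2 - \overline{\alpha}\mu^{-i\nu}e^{\xi\nu}\,a_1 = 0$ and, when $a_1,a_2\neq0$, divide through to get the scalar equation
\[
\frac{\alpha}{\overline\alpha}\,\mu^{2i\nu}e^{-2\xi\nu}=\frac{a_1}{a_2}=\kappa .
\]
Writing $\alpha=|\alpha|e^{i\eta}$ so that $\alpha/\overline\alpha=e^{2i\eta}$, and writing $\omega^2=-\lambda=\rho e^{i(\theta+\pi)}$ with $\omega=\mu e^{i\xi}$ (so $\mu^2=\rho$, $2\xi=\theta+\pi$, and the constraint $|\xi|<\pi/2$ becomes $|\theta|<0$... more precisely $\xi\in(-\pi/2,\pi/2)$ corresponds to $\theta+\pi\in(-\pi,\pi)$, i.e.\ $\lambda\notin[0,\infty)$), the equation becomes $e^{2i\eta}\rho^{i\nu}e^{-(\theta+\pi)\nu}=\kappa$. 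Solving the modulus gives $e^{-(\theta+\pi)\nu}=|\kappa|$... let me instead match to the paper's own normalization: the set $S(\kappa)$ in \eqref{defS} is exactly $\{-\rho e^{i\theta}:\rho^{-i\nu}e^{\theta\nu}=\kappa e^{2i\eta}\}$, so I should identify $\lambda=-\rho e^{i\theta}$, $\omega^2=-\lambda=\rho e^{i(\theta-\pi)}$ or $\rho e^{i(\theta+\pi)}$ depending on the branch, and check the algebra produces precisely the relation defining $S(\kappa)$. This is the bookkeeping heart of part (i).

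Next I would address the range restriction \eqref{rg1}. The point is that $\omega$ must actually lie in $\C_+$, equivalently $\xi\in(-\pi/2,\pi/2)$, equivalently $\lambda=-\omega^2\notin[0,\infty)$, equivalently the argument $\theta$ of the half-line on which $S(\kappa)$ lies must satisfy $\theta\in(-\pi,\pi)$ — but more sharply, every solution point $-\rho_je^{i\theta}$ of the determinant equation with $\theta=\log|\kappa|/\nu$ is a genuine eigenvalue only if the corresponding $\omega$ has positive real part. From $\theta=\frac{\log|\kappa|}{\nu}$ one reads off that $\theta\in(-\pi,\pi)$ iff $|\kappa|\in(e^{-\nu\pi},e^{\nu\pi})$, which is exactly \eqref{rg1}. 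When \eqref{rg1} holds, $S(\kappa)$ is a sequence on a ray strictly inside $\C\setminus[0,\infty)$ accumulating at $0$ and at $\infty$, each point being an eigenvalue (the eigenfunction is $\varphi_{\omega,0}\in L^2(\R_+)\cap D(L_A)$, and the eigenvalue has geometric multiplicity one since \eqref{res.eq} has a one-dimensional $L^2$ solution space). For $\lambda\notin[0,\infty)\cup S(\kappa)$, Lemma~\ref{spec} gives invertibility of $\omega^2+L_A$, so $\lambda\in\rho(L_A)$; combined with Lemma~\ref{easy} this yields $\sigma(L_A)=[0,\infty)\cup S(\kappa)$ and that $S(\kappa)$ is exactly the set of eigenvalues outside $[0,\infty)$.

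For part (ii) I would split into the remaining cases. If $a_1=0$ (so $a_2\neq0$), the determinant \eqref{determinant} equals $-\overline\alpha\mu^{-i\nu}e^{\xi\nu}a_1=0$ identically, so $\varphi_{\omega,0}\in D(L_A)$ for \emph{no}... wait — with $a_1=0$ the determinant is $\alpha\mu^{i\nu}e^{-\xi\nu}\cdot a_2 - \overline\alpha\mu^{-i\nu}e^{\xi\nu}\cdot 0 = \alpha\mu^{i\nu}e^{-\xi\nu}a_2\neq0$ always, so $\varphi_{\omega,0}\notin D(L_A)$ for every $\omega\in\C_+$, hence $\omega^2+L_A$ is invertible for all such $\omega$ by Lemma~\ref{spec} and $\sigma(L_A)=[0,\infty)$ by Lemma~\ref{easy}; symmetrically for $a_2=0$. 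If $a_1,a_2\neq0$ but $|\kappa|\notin(e^{-\nu\pi},e^{\nu\pi})$, then the would-be solution ray has $\theta=\log|\kappa|/\nu\notin(-\pi,\pi)$ (or lands on the boundary), so the determinant equation $e^{2i\eta}\rho^{i\nu}e^{-(\theta+\pi)\nu}=\kappa$ has no solution with $\omega\in\C_+$; again Lemma~\ref{spec} gives invertibility throughout $\C_+^2\setminus[0,\infty)$ and $\sigma(L_A)=[0,\infty)$. The main obstacle I anticipate is none of the functional analysis but rather getting every branch cut and sign exactly right in the chain $\lambda\mapsto\omega=\sqrt{-\lambda}\mapsto(\mu,\xi)\mapsto$ the equation for $S(\kappa)$ — in particular verifying that the open condition $|\xi|<\pi/2$ translates precisely to \eqref{rg1} and that no solutions are lost or spuriously gained at the boundary $|\kappa|=e^{\pm\nu\pi}$; I would handle this by carrying $\lambda=-\rho e^{i\theta}$ as the primary variable from the outset, as the paper does in defining $S(\kappa)$, rather than $\omega$.
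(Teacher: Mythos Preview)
Your approach is essentially identical to the paper's: invoke Lemma~\ref{easy} for $[0,\infty)\subset\sigma(L_A)$, invoke Lemma~\ref{spec} to reduce the rest to $\varphi_{\omega,0}\in D(L_A)$, express this via the vanishing of the determinant \eqref{determinant}, rewrite as $a_1\overline{\alpha}=a_2\alpha\mu^{2i\nu}e^{-2\xi\nu}$, and identify the solution set with $S(\kappa)$ subject to the constraint $|2\xi|<\pi$, which yields \eqref{rg1}. The paper's proof does exactly this in four lines.

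One small gap: the theorem asserts that $S(\kappa)$ is the set of \emph{all} eigenvalues of $L_A$, not merely the eigenvalues off $[0,\infty)$. Your proposal establishes the latter but does not rule out eigenvalues in $[0,\infty)$. The paper closes this by citing Lemma~\ref{behavior3}: for $\lambda=\mu^2>0$ every solution of $Lu=\lambda u$ is a linear combination of functions behaving like $e^{\pm i\mu r}$ at infinity, hence no nonzero solution lies in $L^2(\R_+)$; the case $\lambda=0$ (solutions $r^{1/2\pm i\nu}$) is similar. You should add a sentence to this effect.
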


\begin{proof} 
Lemma \ref{easy} yields $[0,\infty[\subset \sigma (L_A)$. 
If $\omega =\mu e^{i\xi}\in \C_+$, $|\xi| <\pi/2$, 
Lemma \ref{spec} says that $\lambda=-\omega^2 \in \sigma(L_A)$ 
if and only if $\varphi_{\omega,0}\in D(L_A)$. By (\ref{determinant}) this happens  
if and only if 
\begin{align}\label{eq.spec}
a_1\overline{\alpha}=a_2\alpha \mu^{2i\nu}e^{-2\xi\nu}
\end{align}
or $\lambda \in S(\kappa, \alpha)$.
Since $|2\xi |<\pi$ this equation can be satisfied only when (\ref{rg1}) holds. Finally, the assertion concerning the eigenvalues follow from Lemmas \ref{behavior3}, \ref{spec}.
\end{proof}

\noindent  Finally, we characterize the adjoint of $L_A$.

\begin{proposition} \label{adjoint} 
Let $A=(a_1,a_2) \in \C^2 \setminus \{(0,0)\}$. 
Then $(L_A)^*=L_B$ where $B=(b_1,b_2)$ and $b_1=\overline{a}_2$, $b_2=\overline{a}_1$. 
$L_A$ is selfadjoint if and only if $|a_1|=|a_2|$.
\end{proposition}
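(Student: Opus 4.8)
The plan is to compute $(L_A)^*$ directly from the definition of the adjoint, using the Lagrange identity (integration by parts) for the formal operator $L$, and then to read off the self-adjointness criterion from the resulting boundary condition at $0$. First I would recall that, since $L_{\min} \subset L_A \subset L_{\max}$ and $L_{\max} = (L_{\min})^*$, taking adjoints reverses the inclusions, so $L_{\min} \subset (L_A)^* \subset L_{\max}$; moreover $\rho((L_A)^*) \neq \emptyset$ since $\rho(L_A) \neq \emptyset$ (Theorem \ref{LA.spec} guarantees $\rho(L_A)$ is nonempty, e.g. any $\lambda$ in the left half-plane off $S(\kappa)$). Hence by Lemma \ref{intermediate} / Definition \ref{LA} there is a pair $B = (b_1, b_2) \in \C^2 \setminus \{(0,0)\}$, unique up to scalar, with $(L_A)^* = L_B$. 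The task is thus reduced to identifying $B$.

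Next I would set up the bilinear form. For $u \in D(L_A)$ and $v \in D(L_{\max})$, integration by parts on $(\ep, 1/\ep)$ gives
\begin{equation*}
\int_\ep^{1/\ep} (Lu)\,\overline{v}\,dr - \int_\ep^{1/\ep} u\,\overline{Lv}\,dr
= \bigl[ u' \overline{v} - u \overline{v'} \bigr]_\ep^{1/\ep}.
\end{equation*}
The endpoint at $1/\ep$ vanishes in the limit because every function in $D(L_{\max})$ decays appropriately at $\infty$ (this uses the $L^2$ condition together with $Lu \in L^2$; it is the standard limit-point behaviour at $\infty$). So $\langle Lu, v\rangle - \langle u, Lv\rangle = -\lim_{\ep \downarrow 0}\,[u'\overline{v} - u\overline{v'}](\ep)$, and $v \in D((L_A)^*)$ precisely when this boundary limit vanishes for all $u \in D(L_A)$. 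Now I would plug in the asymptotics. Writing $u(r) \sim C r^{1/2}(a_1 r^{i\nu} + a_2 r^{-i\nu})$ and $v(r) \sim C' r^{1/2}(b_1 r^{i\nu} + b_2 r^{-i\nu})$ near $0$ — and noting that the $r^{\pm i\nu}$ oscillations have bounded $C^1$-corrections so term-by-term differentiation of the leading part is legitimate — the cross terms $r^{i\nu}\cdot\overline{r^{i\nu}} = 1$ and $r^{-i\nu}\cdot\overline{r^{-i\nu}} = 1$ survive while the terms $r^{\pm i\nu}\cdot \overline{r^{\mp i\nu}} = r^{\pm 2i\nu}$ might seem not to, but in the Wronskian-type combination $u'\overline v - u \overline{v'}$ the purely oscillatory $r^{\pm 2i\nu}$ pieces also cancel against each other (the factor $r^{1/2}\cdot r^{1/2} = r$ times a derivative of order $r^{-1}$ leaves an $O(1)$ term, and one checks the coefficient vanishes). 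The upshot is
\begin{equation*}
-\lim_{\ep \downarrow 0}[u'\overline v - u\overline{v'}](\ep)
= c\, C\overline{C'}\,\bigl( a_1 \overline{b_1} - a_2 \overline{b_2} \bigr)
\end{equation*}
for a nonzero constant $c$ depending only on $\nu$ (arising from the $2i\nu$ in the differentiated exponents). Requiring this to vanish for all $u \in D(L_A)$, i.e. for all $C$, forces $a_1\overline{b_1} = a_2\overline{b_2}$. Since $(b_1, b_2)$ is determined only up to a scalar, the solution is $b_1 = \overline{a_2}$, $b_2 = \overline{a_1}$ (up to scalar), which is the claim $(L_A)^* = L_B$.

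Finally, for the self-adjointness criterion: $L_A = (L_A)^*$ iff $(a_1, a_2)$ and $(\overline{a_2}, \overline{a_1})$ are proportional, i.e. $a_1 = \lambda \overline{a_2}$ and $a_2 = \lambda \overline{a_1}$ for some $\lambda \in \C \setminus \{0\}$. Substituting, $a_1 = \lambda \overline{a_2} = \lambda \overline{\lambda \overline{a_1}} = |\lambda|^2 a_1$; if $a_1 \neq 0$ this gives $|\lambda| = 1$ and then $|a_1| = |\overline{a_2}| = |a_2|$ (and the case $a_1 = 0$ forces $a_2 = 0$, excluded, and symmetrically), so self-adjointness is equivalent to $|a_1| = |a_2|$. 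Conversely if $|a_1| = |a_2|$ one produces such a $\lambda$ of modulus one directly.

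I expect the main obstacle to be the careful justification of the boundary-limit computation: one must show that the error terms implicit in \eqref{characterize} (the $o(r^{1/2})$ remainders in $u$ and $v$, and their derivatives) genuinely contribute nothing to $\lim_{\ep\downarrow 0}[u'\overline v - u\overline{v'}](\ep)$, and that the oscillatory $r^{\pm 2i\nu}$ cross terms cancel exactly rather than merely staying bounded. This requires knowing that membership in $D(L_{\max})$ controls not just $u$ near $0$ but also enough of $u'$ — concretely, that from $u = C r^{1/2}(a_1 r^{i\nu} + a_2 r^{-i\nu}) + o(r^{1/2})$ and $Lu \in L^2$ one can differentiate the asymptotic expansion, which follows by writing $u$ in the basis $\varphi_{\omega,0}, \varphi_{\omega,1}$ plus a $T_\omega$-term as in Lemma \ref{green} and using the $C^1$ asymptotics \eqref{phi0.zero}, \eqref{phi1.zero} established in Lemmas \ref{behavior}, \ref{behavior2}. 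Once that bookkeeping is in place the algebra is immediate.
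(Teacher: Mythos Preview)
Your approach is correct but differs from the paper's. The paper exploits the resolvent representation already obtained in Lemma~\ref{intermediate}: choosing a real $\omega>0$ with $-\omega^2\in\rho(L_A)$ (available by Theorem~\ref{LA.spec}), one has $(\omega^2+L_A)^{-1}f = c\bigl(\int_0^\infty \varphi_{\omega,0}f\bigr)\varphi_{\omega,0} + T_\omega f$ for some $c\in\C$. Since $T_\omega$ is self-adjoint for real $\omega$ (Lemma~\ref{green}) and the rank-one piece has adjoint obtained by replacing $c$ with $\bar c$, the resolvent of $(L_A)^*$ is read off directly; applying~\eqref{a1a2} with $\bar c$ in place of $c$ and using that $W(\omega)$ is purely imaginary gives $b_1=\overline{a_2}$, $b_2=\overline{a_1}$ by inspection. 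Your route via the Lagrange boundary form is the classical boundary-triple approach and is conceptually more transparent in that it explains \emph{why} the swap $(a_1,a_2)\mapsto(\overline{a_2},\overline{a_1})$ occurs---the pairing $a_1\overline{b_1}-a_2\overline{b_2}$ is the intrinsic symplectic form on the deficiency quotient---whereas in the paper's argument this emerges from the fact that $W(\omega)\in i\R$. The cost is exactly the remainder bookkeeping you flag: the paper never needs to differentiate the asymptotics of $u$ near $0$, because it operates entirely at the level of the resolvent kernel and the already-established $L^2$ boundedness of $T_\omega$.
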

\begin{proof} 
Theorem \ref{LA.spec} yields the existence of $\omega>0$ such that $\omega^2+L_A$ is invertible. 
From Lemma \ref{intermediate} we know that
$$
(\omega^2+L_A)^{-1}f=c 
\left (\int_0^\infty \varphi_{\omega,0}(s)f(s)\, ds\right )\varphi_{\omega,0}+T_\omega f
$$
for a suitable $c \in \C$ and then (\ref{a1a2}) with $\mu=\omega$ and $\xi=0$ yields 
$$
a_1=(c+W(\omega)^{-1})\alpha\omega^{i\nu} \qquad 
a_2=(c-W(\omega)^{-1})\overline{\alpha}\omega^{-i\nu}.
$$
Since, by Lemma \ref{green}, $T_\omega$ is selfadjoint we obtain
$$
(\omega^2+(L_A)^*)^{-1}f=\overline{c} 
\left (\int_0^\infty \varphi_{\omega,0}(s)f(s)\, ds\right )\varphi_{\omega,0}+T_\omega f
$$
and therefore $(L_A)^*=L_B$ where
$$
b_1=(\overline{c}+W(\omega)^{-1})\alpha\omega^{i\nu} =\overline{a}_2\qquad 
b_2=(\overline{c}-W(\omega)^{-1})\overline{\alpha}\omega^{-i\nu}=\overline{a}_1
$$
since $W(\omega)$ is purely imaginary. 
Finally, $L_A$ is selfadjoint if and only if $\overline{a}_2=ca_1$, $\overline{a}_1=ca_2$ 
for a suitable $c \in \C\setminus\{0\}$ and this happens if and only if $|a_1|=|a_2|$.
\end{proof}

\begin{remark}\label{location}
Four cases appear in the description of $\sigma(L_A)$. 

\begin{enumerate}[{Case} I. ]
\item Assume that $L_A$ is selfadjoint. 
By Proposition \ref{adjoint}, 
we have $|\kappa|=1$ and $\theta=0$. 
It follows from Theorem \ref{LA.spec} that every selfadjoint extension of $L_{\min}$ 
has infinitely many eigenvalues and its spectrum is unbounded both from above and below,  see Figure 1.

\item Next we consider the case 
\[
|\kappa|=\frac{|a_2|}{|a_1|}\in\left[e^{-\frac{\nu\pi}{2}} , e^{\frac{\nu\pi}{2}}\right].
\]
that is, $\theta\in [-\pi/2,\pi/2]$. In this case, $\rho(-L_A)$ does not contain $\overline{\C_+}\setminus\{0\}$,  
see Figure 2. 
Therefore, $-L_A$ does not generate an analytic semigroup on $L^2(\R_+)$.

\item In the case 
\[
|\kappa|=\frac{|a_2|}{|a_1|}\in 
\left(e^{-\nu\pi} , e^{\nu\pi}\right)\setminus\left[e^{-\frac{\nu\pi}{2}} , e^{\frac{\nu\pi}{2}}\right], 
\]
we have $\theta\in (-\pi,\pi)\setminus[-\pi/2,\pi/2]$ (see Figure 3). 
Hence one can expect that $-L_A$ generates an analytic semigroup on $L^2(\R_+)$. 
Indeed, we prove in Proposition \ref{gen.LA} that 
$-L_A$ generates a bounded analytic semigroup  of angle $\pi/2-|\theta|$.

\item Finally we consider the case
\[
|\kappa|=\frac{|a_2|}{|a_1|}\in 
[0, \infty]\setminus\left(e^{-\nu\pi} , e^{\nu\pi}\right).
\]
Here we use $|\kappa|=\infty$ if $a_1=0$ and $|\kappa|=0$ if $a_2=0$. 
By Theorem \ref{LA.spec} (ii) we have $\sigma(L_A)=[0,\infty)$, see Figure 4. 
As in Case III, we prove that 
$-L_A$ generates a bounded analytic semigroup on $L^2(\R_+)$ of angle $\pi/2$. 

\end{enumerate}

\begin{minipage}{0.8\textwidth}

\unitlength 0.1in
\begin{picture}( 57.2500, 47.6000)( -7.1500,-50.7500)
%
\special{pn 4}%
\special{pa 400 1400}%
\special{pa 2400 1400}%
\special{fp}%
\special{sh 1}%
\special{pa 2400 1400}%
\special{pa 2334 1380}%
\special{pa 2348 1400}%
\special{pa 2334 1420}%
\special{pa 2400 1400}%
\special{fp}%
\special{pa 1400 2400}%
\special{pa 1400 400}%
\special{fp}%
\special{sh 1}%
\special{pa 1400 400}%
\special{pa 1380 468}%
\special{pa 1400 454}%
\special{pa 1420 468}%
\special{pa 1400 400}%
\special{fp}%
%
\special{pn 20}%
\special{pa 1400 1400}%
\special{pa 2400 1400}%
\special{fp}%
\put(24.0000,-13.0000){\makebox(0,0){$\mathbb{R}$}}%
\put(15.0000,-4.0000){\makebox(0,0){$i\mathbb{R}$}}%
\put(14.0000,-25.5000){\makebox(0,0){Figure 1 : Selfadjoint case $\theta=0$ (Case I)}}%
%
\special{pn 20}%
\special{sh 1}%
\special{ar 600 1400 10 10 0  6.28318530717959E+0000}%
\special{sh 1}%
\special{ar 1000 1400 10 10 0  6.28318530717959E+0000}%
\special{sh 1}%
\special{ar 1200 1400 10 10 0  6.28318530717959E+0000}%
\special{sh 1}%
\special{ar 1300 1400 10 10 0  6.28318530717959E+0000}%
\special{sh 1}%
\special{ar 1350 1400 10 10 0  6.28318530717959E+0000}%
\special{sh 1}%
\special{ar 1380 1400 10 10 0  6.28318530717959E+0000}%
\special{sh 1}%
\special{ar 1390 1400 10 10 0  6.28318530717959E+0000}%
\special{sh 1}%
\special{ar 1390 1400 10 10 0  6.28318530717959E+0000}%
%
\special{pn 8}%
\special{pa 3006 1406}%
\special{pa 5006 1406}%
\special{fp}%
\special{sh 1}%
\special{pa 5006 1406}%
\special{pa 4938 1386}%
\special{pa 4952 1406}%
\special{pa 4938 1426}%
\special{pa 5006 1406}%
\special{fp}%
\special{pa 4006 2406}%
\special{pa 4006 406}%
\special{fp}%
\special{sh 1}%
\special{pa 4006 406}%
\special{pa 3986 472}%
\special{pa 4006 458}%
\special{pa 4026 472}%
\special{pa 4006 406}%
\special{fp}%
%
\special{pn 20}%
\special{pa 4006 1406}%
\special{pa 5006 1406}%
\special{fp}%
\put(50.0500,-13.0500){\makebox(0,0){$\mathbb{R}$}}%
\put(41.0500,-4.0500){\makebox(0,0){$i\mathbb{R}$}}%
\put(40.0500,-25.5500){\makebox(0,0){Figure 2 : $|\theta|\leq \pi/2$ (Case II)}}%
%
\special{pn 20}%
\special{sh 1}%
\special{ar 3370 920 10 10 0  6.28318530717959E+0000}%
\special{sh 1}%
\special{ar 3688 1162 10 10 0  6.28318530717959E+0000}%
\special{sh 1}%
\special{ar 3848 1284 10 10 0  6.28318530717959E+0000}%
\special{sh 1}%
\special{ar 3928 1344 10 10 0  6.28318530717959E+0000}%
\special{sh 1}%
\special{ar 3968 1374 10 10 0  6.28318530717959E+0000}%
\special{sh 1}%
\special{ar 3992 1392 10 10 0  6.28318530717959E+0000}%
\special{sh 1}%
\special{ar 4000 1398 10 10 0  6.28318530717959E+0000}%
\special{sh 1}%
\special{ar 4000 1398 10 10 0  6.28318530717959E+0000}%
%
\special{pn 8}%
\special{pa 406 4006}%
\special{pa 2406 4006}%
\special{fp}%
\special{sh 1}%
\special{pa 2406 4006}%
\special{pa 2338 3986}%
\special{pa 2352 4006}%
\special{pa 2338 4026}%
\special{pa 2406 4006}%
\special{fp}%
\special{pa 1406 5006}%
\special{pa 1406 3006}%
\special{fp}%
\special{sh 1}%
\special{pa 1406 3006}%
\special{pa 1386 3072}%
\special{pa 1406 3058}%
\special{pa 1426 3072}%
\special{pa 1406 3006}%
\special{fp}%
%
\special{pn 20}%
\special{pa 1406 4006}%
\special{pa 2406 4006}%
\special{fp}%
\put(24.0500,-39.0500){\makebox(0,0){$\mathbb{R}$}}%
\put(15.0500,-30.0500){\makebox(0,0){$i\mathbb{R}$}}%
\put(14.0500,-51.5500){\makebox(0,0){Figure 3 : $\pi/2<|\theta|<\pi$ (Case III)}}%
%
\special{pn 20}%
\special{sh 1}%
\special{ar 1870 3330 10 10 0  6.28318530717959E+0000}%
\special{sh 1}%
\special{ar 1648 3662 10 10 0  6.28318530717959E+0000}%
\special{sh 1}%
\special{ar 1536 3830 10 10 0  6.28318530717959E+0000}%
\special{sh 1}%
\special{ar 1482 3912 10 10 0  6.28318530717959E+0000}%
\special{sh 1}%
\special{ar 1454 3954 10 10 0  6.28318530717959E+0000}%
\special{sh 1}%
\special{ar 1436 3978 10 10 0  6.28318530717959E+0000}%
\special{sh 1}%
\special{ar 1432 3986 10 10 0  6.28318530717959E+0000}%
\special{sh 1}%
\special{ar 1432 3986 10 10 0  6.28318530717959E+0000}%
%
\special{pn 8}%
\special{pa 3010 4010}%
\special{pa 5010 4010}%
\special{fp}%
\special{sh 1}%
\special{pa 5010 4010}%
\special{pa 4944 3990}%
\special{pa 4958 4010}%
\special{pa 4944 4030}%
\special{pa 5010 4010}%
\special{fp}%
\special{pa 4010 5010}%
\special{pa 4010 3010}%
\special{fp}%
\special{sh 1}%
\special{pa 4010 3010}%
\special{pa 3990 3078}%
\special{pa 4010 3064}%
\special{pa 4030 3078}%
\special{pa 4010 3010}%
\special{fp}%
%
\special{pn 20}%
\special{pa 4010 4010}%
\special{pa 5010 4010}%
\special{fp}%
\put(50.1000,-39.1000){\makebox(0,0){$\mathbb{R}$}}%
\put(41.1000,-30.1000){\makebox(0,0){$i\mathbb{R}$}}%
\put(40.1000,-51.6000){\makebox(0,0){Figure 4 : $|\theta|\geq \pi$ (Case IV)}}%
%
\special{pn 8}%
\special{pa 4000 1400}%
\special{pa 3000 660}%
\special{fp}%
%
\special{pn 4}%
\special{ar 4000 1410 700 700  3.1415927 3.8000180}%
\put(32.1000,-11.5000){\makebox(0,0){$|\theta|$}}%
%
\special{pn 8}%
\special{pa 1400 4000}%
\special{pa 2070 3050}%
\special{fp}%
%
\special{pn 8}%
\special{ar 1410 4010 700 700  3.1415927 5.3099454}%
\put(10.3000,-33.1000){\makebox(0,0){$|\theta|$}}%
\end{picture}%

\end{minipage}
\end{remark}

\bigskip \bigskip

\section{Generation of analytic semigroups}
We characterize when $L_A$ generates an analytic semigroup.

\begin{theorem}\label{gen.LA}
Let $L_{A}$ be defined in Definition \ref{LA}. 
Then $-L_{A}$ generates a bounded analytic semigroup $\{T_A(z)\}$ on $L^2(\R_+)$ 
if and only if $a_1$ and $a_2$ satisfy 
\begin{equation}\label{good.rg}
|\kappa|=\frac{|a_2|}{|a_1|}
\in [0,\infty]\setminus \left[e^{-\frac{\nu\pi}{2}}, e^{\frac{\nu\pi}{2}}\right].
\end{equation}
Moreover, if $\theta=\frac{\log |\kappa|}{\nu}$, the maximal angle of analyticity $\theta_A$ of $\{T_A(z)\}$ is given by 
\[
\theta_A:=\begin{cases}
\ |\theta|-\dfrac{\pi}{2} & \text{if}\ |\kappa|
\in (e^{-\nu\pi}, e^{\nu\pi})\setminus \left[e^{-\frac{\nu\pi}{2}}, e^{\frac{\nu\pi}{2}}\right], 
\\[10pt]
\ \dfrac{\pi}{2}& \text{otherwise}.
\end{cases}
\]
\end{theorem}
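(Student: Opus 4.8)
The plan is to combine the resolvent estimates implicit in Lemma \ref{green} with the spectral picture of Theorem \ref{LA.spec}, using the standard characterization of generators of bounded analytic semigroups: $-L_A$ generates a bounded analytic semigroup of angle $\delta$ if and only if $\sigma(L_A)$ is contained in the closed sector $\overline{\Sigma}_{\pi/2-\delta}:=\{z: |\Arg z|\le \pi/2-\delta\}\cup\{0\}$ (with vertex at the origin, opening toward $+\infty$) and, on each smaller sector $|\Arg(-\omega^2)|\le \pi-\delta-\ep$, i.e.\ for $\omega=\mu e^{i\xi}$ with $|\xi|\le \pi/2-\ep'$, one has a bound $\|(\omega^2+L_A)^{-1}\|\le C_\ep/|\omega|^2$. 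The first half — the spectral inclusion — is exactly what Theorem \ref{LA.spec} and Remark \ref{location} give: when $|\kappa|\notin(e^{-\nu\pi},e^{\nu\pi})$ the spectrum is $[0,\infty)$, so $\delta=\pi/2$ works; when $|\kappa|\in(e^{-\nu\pi},e^{\nu\pi})\setminus[e^{-\nu\pi/2},e^{\nu\pi/2}]$ the extra spectrum $S(\kappa)$ lies on the ray $\arg z = \pi - \theta\,\mathrm{sgn}(\log|\kappa|)$ with $|\theta|\in(\pi/2,\pi)$, so $\sigma(L_A)$ sits in the sector of half-opening $\pi-|\theta|<\pi/2$ about $\R_+$, giving $\delta=|\theta|-\pi/2$; and when $|\theta|\le\pi/2$ the eigenvalue ray enters $\overline{\C_+}$, so no analytic semigroup is possible and the ``only if'' direction is done.

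The substance is the resolvent bound. First I would make explicit that for $\omega$ outside the spectral set, Lemma \ref{spec} and its proof give $(\omega^2+L_A)^{-1}f = c(\omega)\bigl(\int_0^\infty\varphi_{\omega,0}(s)f(s)\,ds\bigr)\varphi_{\omega,0} + T_\omega f$, where $c(\omega)$ is determined by the linear system $b_i(\omega)=\kappa a_i$ appearing there. From Lemma \ref{green}, $\|T_\omega\|_{L^2\to L^2}\le C(\Re\omega)^{-1}$ uniformly for $|\xi|\le\pi/2-\ep'$ (integrate the kernel bound $|G_\omega(r,s)|\le C^2 e^{-(\Re\omega)|r-s|}$ via Schur's test), and $\Re\omega = \mu\cos\xi \ge \mu\sin\ep'$, so $\|T_\omega\|\le C_\ep/|\omega|$, which is even better than the $|\omega|^{-2}$ required. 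For the rank-one term I would estimate $\|\varphi_{\omega,0}\|_2^2 \le \int_0^\infty C^2 e^{-2(\Re\omega)r}\,dr \le C/(\Re\omega)\le C_\ep/|\omega|$ (the behavior near $0$ is $r^{1/2}$ times a bounded oscillation, integrable), and by Cauchy–Schwarz $|\int_0^\infty\varphi_{\omega,0}f| \le \|\varphi_{\omega,0}\|_2\|f\|_2 \le C_\ep|\omega|^{-1/2}\|f\|_2$; so the rank-one term is bounded by $C_\ep|c(\omega)|\,|\omega|^{-1}\|f\|_2$. Hence everything reduces to showing $|c(\omega)|$ stays bounded (or grows slower than $|\omega|$) as $|\omega|\to\infty$ and as $|\omega|\to 0$ within the sector. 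Solving the $2\times2$ system in Lemma \ref{spec}, $c(\omega)$ is a ratio whose denominator is the determinant $a_1\overline{\alpha}\mu^{-i\nu}e^{\xi\nu} - a_2\alpha\mu^{i\nu}e^{-\xi\nu}$ (up to nonzero constants and powers of $W(\omega)$, which is a fixed nonzero multiple of $\mu e^{i\xi}$ by computing the Wronskian from the $r^{\pm1/2}$ asymptotics); this determinant equals $a_2\alpha\mu^{i\nu}e^{-\xi\nu}\bigl(\kappa^{-1}(\overline\alpha/\alpha)\mu^{-2i\nu}e^{2\xi\nu}-1\bigr)$, and the precise hypothesis \eqref{good.rg} is exactly the statement that $|\kappa^{-1}\mu^{-2i\nu}e^{2\xi\nu}|=|\kappa|^{-1}e^{2\xi\nu}$ is bounded away from $1$ uniformly over $|\xi|\le\pi/2-\ep'$ — because $e^{2\xi\nu}$ ranges in a compact subinterval of $(e^{-\nu\pi},e^{\nu\pi})$ while $|\kappa|^{-1}$ sits outside $[e^{-\nu\pi/2},e^{\nu\pi/2}]$ — so the bracket is bounded below in modulus and $|c(\omega)|\le C_\ep$ independently of $\mu$. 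Combining, $\|(\omega^2+L_A)^{-1}\|\le C_\ep/|\omega|$ on each subsector, which gives analyticity of angle $\pi/2-\ep'$ for every $\ep'>0$ in Case IV, i.e.\ the full angle $\pi/2$; and in Case III one repeats the argument on the larger sector $|\xi|<\pi/2 + (|\theta|-\pi/2) - \ep'$, checking that the determinant bound survives up to the critical angle where $e^{2\xi\nu}$ first meets $|\kappa|^{-1}$ — which is precisely $\xi=\theta$ — thereby yielding angle $|\theta|-\pi/2$ and its optimality (at $\xi=\theta$ the point $-\omega^2$ hits $S(\kappa)$, so the resolvent blows up).

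Two clean-up points remain. One must upgrade from sector-plus-resolvent-bound to genuine generation: $L_A$ is closed (it is $L_{\max}$ restricted to a closed subspace condition, or directly because its resolvent is bounded for some $\omega$), densely defined (it contains $C_0^\infty(\R_+)$), and the resolvent estimate on sectors of every angle $<\pi$ about $\R_+$ is the classical criterion (e.g.\ Pazy or Engel–Nagel) for $-L_A$ to generate a bounded analytic semigroup of the stated angle. And one must verify boundedness of the semigroup, not merely quasi-boundedness: since $0\in\partial\sigma(L_A)$ the bound $\|(\omega^2+L_A)^{-1}\|\le C_\ep/|\omega|$ holding \emph{uniformly down to $\omega=0$} — which the argument above delivers, as $|c(\omega)|$ and $\|T_\omega\|\,|\omega|$ were controlled for all $\mu>0$ — is exactly what yields uniform boundedness of $\{T_A(z)\}$ on the sector. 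I expect the main obstacle to be the careful bookkeeping of the constant $c(\omega)$ and the Wronskian's $\omega$-dependence near the boundary angle $\xi=\theta$ in Case III: one has to show the denominator's modulus is bounded below by a positive constant on the open sector $|\xi|<\theta$ but degenerates precisely at $|\xi|=\theta$, and translate this into the sharp angle of analyticity rather than a suboptimal one.
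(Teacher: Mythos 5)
Your strategy is genuinely different from the paper's: you aim at resolvent bounds that are \emph{uniform in} $\omega$, extracted directly from the Green's-function representation of Lemmas \ref{green} and \ref{spec}, with the sharp angle read off from the degeneration of the determinant \eqref{determinant}. The paper never proves any uniform estimate: it gets $\|(\lambda+L_A)^{-1}\|\leq M_\ep/|\lambda|$ for free on the compact set $\Sigma_\ep$ ($1\leq|\lambda|\leq e^{2\pi/\nu}$ inside the sector) and then propagates it to the whole sector by the unitary dilations $I_s$, $s\in G(\nu)$, using the discrete scale invariance $L_AI_s=s^2I_sL_A$ of Lemma \ref{scaling}. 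Your route can be made to work, but as written it contains a genuine quantitative gap.

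The gap is in the $\omega$-dependence of the constants. The kernel bound of Lemma \ref{green}, $|G_\omega(r,s)|\leq C^2e^{-({\rm Re}\,\omega)|r-s|}$, is stated for fixed $\omega$ and its constant hides the factor $|W(\omega)|^{-1}$; a direct computation from the $r^{\frac12\pm i\nu}$ asymptotics \eqref{phi0.zero}, \eqref{phi1.zero} gives $W(\omega)=4i\nu|\alpha|^2\,\omega$, so this factor decays like $|\omega|^{-1}$. Ignoring it, your estimates give $\|T_\omega\|\leq C_\ep/|\omega|$ and, with only $|c(\omega)|\leq C_\ep$, a rank-one term of size $C_\ep/|\omega|$, i.e.\ in total $\|(\lambda+L_A)^{-1}\|\leq C_\ep/|\lambda|^{1/2}$ for $\lambda=\omega^2$. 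Contrary to your remark that $|\omega|^{-1}$ is ``even better than the $|\omega|^{-2}$ required'', this is \emph{weaker} for large $|\lambda|$ and does not yield an analytic (nor, directly, even a $C_0$-) semigroup. The repair is precisely the missing bookkeeping: solving the system in Lemma \ref{spec} gives $c(\omega)=W(\omega)^{-1}\times$ a ratio whose denominator is your determinant, so under \eqref{good.rg} (restricted to a closed subsector, where the determinant is uniformly bounded below) one has $|c(\omega)|\leq C_\ep/|\omega|$, and likewise the kernel carries one factor $|W(\omega)|^{-1}$; then each term is $O(|\omega|^{-2})=O(|\lambda|^{-1})$, uniformly down to $\lambda=0$, which is what boundedness of the semigroup requires. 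Two smaller points: the bounds $|\varphi_{\omega,0}(r)|\leq Ce^{-({\rm Re}\,\omega)r}$, $|\varphi_{\omega,1}(r)|\leq Ce^{({\rm Re}\,\omega)r}$ must be justified with $C$ independent of $\omega$ on closed subsectors (Lemmas \ref{behavior} and \ref{behavior2} fix $\omega$; uniformity follows since $\varphi_{\omega,0}(r)=w_0(\omega r)$, $\varphi_{\omega,1}(r)=v(\omega r)$ with $w_0,v$ fixed functions admitting such bounds on closed subsectors of $\C_+$); and your ``critical angle $\xi=\theta$'' conflates the argument of $\omega$ with that of $\lambda=\omega^2$: the determinant degenerates at $\xi=\theta/2$, i.e.\ $\Arg\lambda=\theta$, which is exactly what produces the sector $\Sigma(|\theta|)$ and the angle $\theta_A=|\theta|-\pi/2$ in Case III. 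With these corrections your argument goes through and even gives explicit resolvent constants, which the paper's compactness-plus-scaling argument does not; the paper's argument, in exchange, avoids all uniform estimates on $\varphi_{\omega,0}$, $\varphi_{\omega,1}$, $W(\omega)$ and $c(\omega)$.
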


\noindent Setting
\[
\Sigma(\theta):=\{z\in \C\setminus\{0\}\;;\;|{\rm Arg}\,z|<|\theta|\}. 
\]
from Theorem \ref{LA.spec}, we immediately obtain
\begin{lemma}\label{surj}
$\Sigma(\pi/2+\theta_A)\subset \rho(-L_{A})$. 
In particular, $\overline{\C}_+\setminus\{0\}\subset \rho(-L_{A})$
if and only if $a_1$ and $a_2$ satisfy \eqref{good.rg}.
\end{lemma}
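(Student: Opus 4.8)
The plan is to read the statement directly off Theorem \ref{LA.spec}, using the elementary identity $\sigma(-L_A)=-\sigma(L_A)$, so that $z\in\rho(-L_A)$ precisely when $-z\notin\sigma(L_A)$. By Theorem \ref{LA.spec} one always has either $\sigma(L_A)=[0,\infty)$ or $\sigma(L_A)=[0,\infty)\cup S(\kappa)$, the latter occurring exactly when $a_1,a_2\neq 0$ and $|\kappa|\in(e^{-\nu\pi},e^{\nu\pi})$; by \eqref{defS} the set $S(\kappa)$ lies on the half-line $\{-\rho e^{i\theta}:\rho>0\}$, and \eqref{rg1} forces $|\theta|<\pi$. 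Negating, $\sigma(-L_A)\subset(-\infty,0]\cup\{\rho e^{i\theta}:\rho>0\}$ in every case. Since the open sector $\Sigma(\vartheta)=\{z\neq0:|\Arg z|<|\vartheta|\}$ and the half-plane $\overline{\C}_+\setminus\{0\}=\{z\neq0:|\Arg z|\le\pi/2\}$ are both symmetric about the real axis, it will be enough to keep track of $|\theta|$ and of the negative real axis.

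First I would establish $\Sigma(\pi/2+\theta_A)\subset\rho(-L_A)$, assuming \eqref{good.rg} so that $\theta_A$ is defined, by splitting into the two sub-cases of its definition. If $|\kappa|\in(e^{-\nu\pi},e^{\nu\pi})\setminus[e^{-\nu\pi/2},e^{\nu\pi/2}]$, then $\pi/2<|\theta|<\pi$ and $\pi/2+\theta_A=|\theta|$; the open sector $\Sigma(|\theta|)$ misses $(-\infty,0]$ because $|\theta|<\pi$, and misses the ray $\{\rho e^{i\theta}:\rho>0\}$ because that ray is one of its (excluded) boundary rays, hence $\Sigma(|\theta|)$ is disjoint from $-\sigma(L_A)$. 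If instead $|\kappa|\in[0,\infty]\setminus(e^{-\nu\pi},e^{\nu\pi})$, then $S(\kappa)=\emptyset$, so $\sigma(L_A)=[0,\infty)$ and $\theta_A=\pi/2$, giving $\Sigma(\pi/2+\theta_A)=\Sigma(\pi)=\C\setminus(-\infty,0]=\rho(-L_A)$. In both sub-cases $\Sigma(\pi/2+\theta_A)\subset\rho(-L_A)$.

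For the ``in particular'' assertion, if \eqref{good.rg} holds then $\theta_A>0$ in each sub-case above, so $\pi/2+\theta_A>\pi/2$ and therefore $\overline{\C}_+\setminus\{0\}\subset\Sigma(\pi/2+\theta_A)\subset\rho(-L_A)$ by the previous paragraph; in the sub-case $\pi/2<|\theta|<\pi$ this inclusion is just $\{|\Arg z|\le\pi/2\}\subset\{|\Arg z|<|\theta|\}$. Conversely, if \eqref{good.rg} fails then $|\kappa|\in[e^{-\nu\pi/2},e^{\nu\pi/2}]\subset(e^{-\nu\pi},e^{\nu\pi})$, so Theorem \ref{LA.spec}(i) applies: $S(\kappa)$ is a nonempty set of eigenvalues of $L_A$ lying on $\{-\rho e^{i\theta}:\rho>0\}$ with $|\theta|\le\pi/2$, whence $-S(\kappa)\subset\sigma(-L_A)$ is a nonempty set of nonzero points of argument $\theta$, all lying in $\overline{\C}_+\setminus\{0\}$; thus $\overline{\C}_+\setminus\{0\}\not\subset\rho(-L_A)$. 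The whole argument is routine bookkeeping with sectors; the only point demanding attention is the clash of normalisations --- $\kappa=a_1/a_2$ in Theorem \ref{LA.spec}(i) versus $|\kappa|=|a_2|/|a_1|$, hence the opposite sign of $\theta$, in Remark \ref{location}, Theorem \ref{gen.LA} and here --- which is harmless because only $|\theta|$ enters and both $\Sigma$ and $\overline{\C}_+$ are symmetric, together with the strict-versus-nonstrict distinction between the open sector $\Sigma$ and the closed half-plane $\overline{\C}_+$ at the boundary rays.
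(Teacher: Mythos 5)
Your proposal is correct and is essentially the paper's own argument: the paper derives Lemma \ref{surj} ``immediately'' from Theorem \ref{LA.spec}, and your write-up simply makes that deduction explicit by negating the spectrum and checking that the ray carrying $-S(\kappa)$ (argument $\pm\theta$) lies outside the open sector $\Sigma(\pi/2+\theta_A)$ but inside $\overline{\C}_+\setminus\{0\}$ when \eqref{good.rg} fails. Your handling of the $\kappa=a_1/a_2$ versus $|\kappa|=|a_2|/|a_1|$ notational clash (only $|\theta|$ matters, by symmetry of the sectors) is the right reading of the paper.
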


\noindent To prove Theorem \eqref{gen.LA}, we use a scaling argument.
It worth noticing that if $a_1\neq 0$ and $a_2\neq 0$, then 
$D(L_{A})$ is not invariant under scaling $u(r)\mapsto u(s_0r)$ for some $s_0>0$ 
in spite of the scale invariant property of $D(L_{\min})$ and $D(L_{\max})$. 
This means that the scale symmetry of $L_A$ (with $s\in(0,\infty)$) is broken. 
However, there exists a subgroup $G$ of $(0,\infty)$ 
such that the scale symmetry of $L_A$ with $s\in G$ is still true.

\begin{lemma}\label{scaling}
For $\nu>0$, we define 
\begin{equation}\label{G}
G(\nu):=\left\{e^{\frac{m\pi}{\nu}}\;;\;m\in\Z\right\}.
\end{equation}
Assume that $a_1\neq 0$ and $a_2\neq 0$.
Then $D(L_{A})$ is invariant under the scaling $u(r)\mapsto u(sr)$ if and only if $s\in G(\nu)$.
On the other hand, if $a_1=0$ or $a_2=0$, then $D(L_{A})$ 
is invariant under the scaling $u(r)\mapsto u(sr)$ for every $s\in(0,\infty)$. 
\end{lemma}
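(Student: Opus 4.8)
The plan is to compute how the asymptotic expansion characterizing $D(L_A)$ transforms under the dilation $R_s\colon u\mapsto u_s$, $u_s(r):=u(sr)$, and then to read off the condition on $s$ from the injectivity of $A\in\C P_1\mapsto L_A$ established in Remark \ref{meaning}.

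First I would note that $D(L_{\max})$ is itself dilation invariant: since $(Lu_s)(r)=s^2(Lu)(sr)$ and $\|u_s\|_{L^2(\R_+)}^2=s^{-1}\|u\|_{L^2(\R_+)}^2$, one has $u\in D(L_{\max})$ if and only if $u_s\in D(L_{\max})$ for every $s>0$. Next, take $u\in D(L_A)$ with $\lim_{r\downarrow0}|r^{-1/2}u(r)-C(a_1r^{i\nu}+a_2r^{-i\nu})|=0$ and substitute $\rho=sr$. Using $r^{-1/2}u_s(r)=s^{1/2}\rho^{-1/2}u(\rho)$ together with $\rho^{i\nu}=s^{i\nu}r^{i\nu}$ and $|s^{i\nu}|=1$, one obtains
\[
\lim_{r\downarrow0}\Big|r^{-\frac12}u_s(r)-C s^{\frac12}\big(a_1s^{i\nu}r^{i\nu}+a_2s^{-i\nu}r^{-i\nu}\big)\Big|=0,
\]
so (since $s^{1/2}\neq0$ the prefactor $Cs^{1/2}$ ranges over all of $\C$) $u_s\in D(L_{A_s})$ with $A_s:=(a_1s^{i\nu},a_2s^{-i\nu})\in\C^2\setminus\{(0,0)\}$. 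Applying the same computation to $u_s$ with the dilation $R_{1/s}$ and noting $(A_s)_{1/s}=A$ and $R_sR_{1/s}=\mathrm{id}$, one gets in fact $R_s\big(D(L_A)\big)=D(L_{A_s})$. Hence $D(L_A)$ is invariant under $R_s$ if and only if $D(L_{A_s})=D(L_A)$.

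By Remark \ref{meaning} the latter holds if and only if $A_s$ is a nonzero scalar multiple of $A$, i.e. there is $c\in\C\setminus\{0\}$ with $a_1s^{i\nu}=ca_1$ and $a_2s^{-i\nu}=ca_2$. If $a_1\neq0$ and $a_2\neq0$, this forces $c=s^{i\nu}=s^{-i\nu}$, hence $s^{2i\nu}=1$, i.e. $2\nu\log s\in2\pi\Z$; this is precisely $s\in G(\nu)$. Conversely, if $s\in G(\nu)$ then $s^{i\nu}=s^{-i\nu}=:c$ and $A_s=cA$. If instead $a_1=0$ (the case $a_2=0$ being symmetric), then $A_s=(0,a_2s^{-i\nu})=s^{-i\nu}A$ with $s^{-i\nu}\neq0$, so $D(L_{A_s})=D(L_A)$ for every $s>0$.

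There is no genuine obstacle here; the argument is essentially bookkeeping. The two points that deserve a line of care are the change of variables $\rho=sr$ inside the limit defining $D(L_A)$ (where $|s^{i\nu}|=1$ is what keeps $A_s$ away from the origin), and the appeal to the injectivity of $A\in\C P_1\mapsto L_A$ from Remark \ref{meaning}, which is what upgrades the set equality $D(L_{A_s})=D(L_A)$ to the proportionality of the pairs $A_s$ and $A$.
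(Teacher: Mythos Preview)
Your proof is correct and follows essentially the same approach as the paper: both compute how the asymptotic pair transforms under dilation and then use the linear independence of $r^{i\nu}$ and $r^{-i\nu}$ near $0$ to read off the constraint $s^{2i\nu}=1$. The only stylistic difference is that you package the linear-independence step as an appeal to the injectivity of $A\in\C P_1\mapsto L_A$ from Remark~\ref{meaning} (introducing the convenient notation $A_s$ and the set equality $R_s(D(L_A))=D(L_{A_s})$), whereas the paper carries out that coefficient comparison directly inside the proof; the underlying computation is the same.
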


\begin{proof} Fix $A=(a_1, a_2)$ with  $a_1\neq 0$ and $a_2\neq 0$and let $u\in D(L_{A})$ satisfy
\[
\lim_{r\downarrow 0}
\left|
  r^{-\frac{1}{2}}u(r)
  -
  C\left(
    a_1r^{i\nu}
    +
    a_2r^{-i\nu}
  \right)
\right|=0
\]
for some $C \neq 0$.
Then $u(sr)\in D(L_{A})$ if and only if  
\[
\lim_{r\downarrow 0}\left|r^{-\frac{1}{2}}u(sr)
-
C'\left(a_1r^{i\nu}+a_2r^{-i\nu}\right)
\right|=0
\]
for some $C'$. This is equivalent to saying that 
\begin{align}
\lim_{r\downarrow0}\left|C\left(a_1(sr)^{i\nu}+a_2(sr)^{-i\nu}\right)
-
C'\left(a_1r^{i\nu}+a_2r^{-i\nu}\right)\right|=0,
\end{align}
or
\begin{align}
Cs^{i\nu}=C'=Cs^{-i\nu}.
\end{align}
We obtain $\log s\in (\pi/\nu)\Z$, or equivalently, $s\in G(\nu)$. The cases $a_1=0$ or $a_2=0$ are similar.
\end{proof}

\begin{proof}[Proof of Theorem \ref{gen.LA}] 
Assume \eqref{good.rg}. 
For  $0<\ep<\theta_A$ let 
\[
\Sigma_\ep:=
\left\{\lambda \in \overline{\Sigma(\pi/2+\theta_A-\ep)}\;;\; 1 \leq |\lambda|\leq e^{\frac{2\pi}{\nu}}\right\}
\subset \rho(-L_A).
\]
Since $\Sigma_\ep$ is compact in $\C$, $\|(\lambda+L_{A})^{-1}\|$ is bounded in $\Sigma_\ep$. 
Therefore we have 
\[
\|(\lambda+L_{A})^{-1}\|\leq \frac{M_\ep}{|\lambda|}, \qquad \lambda\in\Sigma_\ep.
\]
Observe that by Lemma \ref{scaling} the dilation operator $(I_{s}u)(x):=s^{\frac{1}{2}}u(sx)$ satisfies 
$\|I_su\|_{L^2(\R_+)}=\|u\|_{L^2(\R_+)}$ and 
\begin{align}\label{scale.LA}
L_AI_s=s^2I_sL_A, \qquad s\in G(\nu).
\end{align}
Let $\lambda\in \Sigma(\pi/2+\theta_A-\ep)$. Taking $s\in G(\nu)$ as 
\begin{align}\label{choice.s}
\log s_0\in \left[-\frac{\log|\lambda|}{2}, \frac{\pi}{\nu}-\frac{\log|\lambda|}{2}\right)\cap \frac{\pi}{\nu}\Z\neq\emptyset, 
\end{align}
we see that $s_0^2\lambda\in \Sigma_\ep$, and hence, we have 
\[
\|(s_0^2\lambda+L_A)^{-1}\|\leq \frac{M_\ep}{|s_0^2\lambda|}.
\] 
Using \eqref{scale.LA} with \eqref{choice.s}, we obtain
\begin{align*}
\|(\lambda+L_A)^{-1}\|
&=\|(\lambda+s_0^{-2}I_{s_0^{-1}}L_AI_{s_0})^{-1}\|
=s_0^2\|I_{s_0^{-1}}(s_0^2\lambda+L_A)^{-1}I_{s_0}\|
\leq  \frac{s_0^2M_\ep}{|s_0^2\lambda|}
= \frac{M_\ep}{|\lambda|}.
\end{align*}
Therefore $-L_A$ generates a bounded analytic semigroup on $L^2(\R_+)$ of angle $\theta_A$. 
The optimality of $\theta_A$ follows from Lemma \ref{surj}.

\noindent On the other hand, if \eqref{good.rg} is violated, then Proposition \ref{surj} implies that 
$-L_{A}$ does not generates an analytic semigroup on $L^2(\R_+)$. 
\end{proof}

\begin{remark}
In the case $|\kappa|=e^{\frac{\nu\pi}{2}}$ or $|\kappa|=e^{-\frac{\nu\pi}{2}}$, 
we do not know whether the operator $-L_{A}$ generates a $C_0$-semigroup on $L^2(\R_+)$. 
We point out that if $-L_{A}$ generates a $C_0$-semigroup, then 
it cannot be (quasi) contractive because Hardy's inequality does not hold on $C_0^\infty (\R_+)$, since $b <-\frac14$.
\end{remark}

\section{Remarks on the $N$-dimensional case}

Here we consider the $N$-dimensional Schr\"odinger operators, $N \ge 2$,  
\[
L=-\Delta + \frac{b}{|x|^2}, \quad b \in \R
\]
As in one dimension  we define 
\begin{align*}
D(L_{\min})&:=C_0^\infty(\R^N\setminus\{0\}), 
\\
D(L_{\max})&:=\{u\in L^2(\R^N)\cap H^2_{\rm loc}(\R^N\setminus\{0\})\;;\; Lu\in L^2(\R^N)\}.
\end{align*}
Hardy's inequality
\begin{equation}
\left(\frac{N-2}{2}\right)^2\int_{\R^N}\frac{|u|^2}{|x|^2}\,dx
\leq \int_{\R^N}|\nabla u|^2\,dx, 
\quad u\in C_0^\infty(\R^N\setminus\{0\})
\end{equation} 
implies the existence 
of a nonegative selfadjoint extension of $L_{\min}$, namely the Friedrichs extension, for $b\geq -(\frac{N-2}{2})^2$. 
Therefore in this section we assume 
\begin{equation}\label{sup.Nd}
b<-\left(\frac{N-2}{2}\right)^2.
\end{equation}
Using Proposition \ref{gen.LA} we obtain the following result.

\begin{proposition}\label{Nd.gen}
Assume that \eqref{sup.Nd} holds. Then 
there exist infinitely many intermediate operators 
between $L_{\min}$ and $L_{\max}$ which are negative generators of
analytic semigroups on $L^2(\R^N)$. 
\end{proposition}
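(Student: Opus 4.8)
The plan is to reduce the $N$-dimensional problem to a family of one-dimensional problems of the form already treated, via separation of variables into spherical harmonics, and then to invoke Theorem~\ref{gen.LA} on a single (or finitely many) angular sectors while leaving the remaining sectors untouched. First I would recall the orthogonal decomposition $L^2(\R^N) = \bigoplus_{k\geq 0} L^2(\R_+, r^{N-1}\,dr)\otimes \mathcal{H}_k$, where $\mathcal{H}_k$ is the (finite-dimensional) space of spherical harmonics of degree $k$, and on each summand the operator $L=-\Delta+b|x|^{-2}$ acts as the radial operator $-\partial_r^2 - \frac{N-1}{r}\partial_r + \frac{b + k(k+N-2)}{r^2}$ on $L^2(\R_+, r^{N-1}\,dr)$. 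The substitution $u(r) = r^{-(N-1)/2} v(r)$ is unitary from $L^2(\R_+, r^{N-1}\,dr)$ onto $L^2(\R_+)$ and transforms the radial operator into $-\partial_r^2 + \frac{b_k}{r^2}$ on $L^2(\R_+)$ with $b_k = b + k(k+N-2) + \frac{(N-1)(N-3)}{4}$, i.e.\ exactly the Calogero Hamiltonian with parameter $b_k$.

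The key observation is that under hypothesis \eqref{sup.Nd}, for the lowest mode $k=0$ one computes $b_0 = b + \frac{(N-1)(N-3)}{4} = b + \left(\frac{N-2}{2}\right)^2 - \frac14 < -\frac14$, so that the one-dimensional theory of Sections~2--4 applies with $\nu_0 = \sqrt{-\frac14 - b_0} = \sqrt{-(\frac{N-2}{2})^2 - b} > 0$. For $k\geq 1$ the constant $b_k$ increases and may fail $b_k < -\frac14$; on those summands I would simply take the Friedrichs (equivalently, the unique natural) extension, which is a nonnegative selfadjoint operator and in particular $-$ generates a bounded analytic semigroup of angle $\pi/2$ on that summand. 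On the $k=0$ summand, Theorem~\ref{gen.LA} provides, for every pair $A=(a_1,a_2)$ with $|a_2|/|a_1| \in [0,\infty]\setminus[e^{-\nu_0\pi/2}, e^{\nu_0\pi/2}]$, an intermediate operator $L_A$ between the one-dimensional $L_{\min}$ and $L_{\max}$ such that $-L_A$ generates a bounded analytic semigroup. Since there are infinitely many such $A$ giving pairwise distinct operators (Remark~\ref{meaning}), this yields infinitely many choices.

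I would then assemble the $N$-dimensional operator $\tilde L$ as the direct sum of $L_A$ (transported back to the $k=0$ radial fiber) and the Friedrichs extensions on all $k\geq 1$ fibers. One must check three things: (a) $\tilde L$ is an intermediate operator, $L_{\min} \subset \tilde L \subset L_{\max}$ in the $N$-dimensional sense --- this follows because $C_0^\infty(\R^N\setminus\{0\})$ decomposes into finite sums of (smooth radial profile)$\times$(spherical harmonic), each of which lies in the domain of the corresponding fiber operator, and conversely $D(L_{\max})$ respects the decomposition with each component in the respective maximal domain; (b) $-\tilde L$ generates a bounded analytic semigroup on $L^2(\R^N)$ --- this is immediate from the direct-sum structure, taking the semigroup to be the direct sum of the fiber semigroups, with angle the infimum of the fiber angles, which is $\geq \theta_A > 0$ (indeed one should be slightly careful that the resolvent bounds are uniform across the infinitely many fibers $k\geq 1$, but this is clear since those are nonnegative selfadjoint, hence $\|(\lambda+\cdot)^{-1}\|\leq |\lambda|^{-1}|\sin(\arg\lambda)|^{-1}$ uniformly); (c) distinct admissible $A$ give distinct $N$-dimensional operators, which is clear since they differ already on the $k=0$ fiber.

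The main obstacle I anticipate is the bookkeeping in step (a): verifying that the direct sum of the chosen fiber operators, each defined by its own boundary condition at $r=0$, really coincides with a closed operator sandwiched between the genuine $N$-dimensional $L_{\min}$ and $L_{\max}$, and in particular that it is closed. The cleanest route is probably to define $\tilde L$ directly by its resolvent: fix $\omega>0$ in the common resolvent set of all fibers, set $(\omega^2+\tilde L)^{-1}$ to be the direct sum of the fiber resolvents (bounded on $L^2(\R^N)$ by the uniform bound in (b)), and check it is injective with dense range, so that it is indeed the resolvent of a closed operator $\tilde L$; then identify its domain fiber-by-fiber to see $L_{\min}\subset \tilde L \subset L_{\max}$. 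Since the statement only asserts existence of infinitely many such operators, one does not need to characterize all $N$-dimensional intermediate operators, which keeps the argument short.
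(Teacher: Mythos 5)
Your overall strategy --- decompose into spherical harmonics, reduce each angular mode to a one-dimensional Calogero operator with parameter $b_k=b+\left(\frac{N-2}{2}\right)^2-\frac14+k(k+N-2)$, pick a generator fiber by fiber and reassemble the resolvent as a direct sum --- is exactly the paper's route (the paper's Lemma \ref{lem.FG} is your unitary reduction, and its proof of Proposition \ref{Nd.gen} builds $\tilde L$ from the fiber resolvents just as you propose). However, there is a genuine gap in the fiber selection. You apply Theorem \ref{gen.LA} only on the $k=0$ fiber and claim that on all fibers $k\ge 1$ you may take ``the Friedrichs (equivalently, the unique natural) extension, which is a nonnegative selfadjoint operator.'' Under the sole hypothesis \eqref{sup.Nd} the constant $b$ can be arbitrarily negative, so every $k$ with $k(k+N-2)<-b-\left(\frac{N-2}{2}\right)^2$ also has $b_k<-\frac14$; this set can contain $k=1,2,\dots$ up to any prescribed level. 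On such a fiber the quadratic form is unbounded from below, so there is no nonnegative Friedrichs extension at all, and worse, by Proposition \ref{adjoint} together with Theorem \ref{LA.spec} (Case I of Remark \ref{location}) \emph{every} selfadjoint extension has eigenvalues $S(\kappa)$ with $\theta=0$, hence spectrum unbounded below, and cannot be the negative generator of any bounded analytic semigroup. Consequently your direct sum would fail to generate whenever $b<-\left(\frac{N-2}{2}\right)^2-(N-1)$, and your uniformity argument in step (b) (``clear since those are nonnegative selfadjoint'') breaks precisely on those fibers.

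The fix is what the paper does: on \emph{every} fiber with $b_k<-\frac14$ (there are only finitely many) choose a pair $A_k$ satisfying \eqref{good.rg} for $\nu_k=\sqrt{-b_k-\frac14}$ and use the non-selfadjoint extension $L_{A_k}$ from Theorem \ref{gen.LA}, reserving the Friedrichs extension for the fibers with $b_k\ge-\frac14$. Since the bad fibers are finite in number, the constants $M_k$ in the sectorial estimates $\|(\lambda+L_{A_k})^{-1}\|\le M_k/|\lambda|$ admit a finite maximum $\tilde M$, and with the bound $\|(\lambda+L_{k,F})^{-1}\|\le 1/|\lambda|$ on the remaining fibers your assembly of $(\lambda+\tilde L)^{-1}$ as a direct sum, and the verification that $L_{\min}\subset\tilde L\subset L_{\max}$ via Lemma \ref{lem.FG}(ii), go through exactly as you outline; the infinitude of admissible choices of $A_k$ then yields infinitely many generators, which is the paper's conclusion.
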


\noindent To prove Proposition \ref{Nd.gen} we expand $f \in L^2(\R^N)$ in spherical harmonics
\begin{equation*}
f=\sum_{j=0}^\infty F_{j}(G_{j}f). 
\end{equation*}
where 
$F_j:L^2(\R_+)\to L^2(\R^N)$ and $G_j:L^2(\R^N)\to L^2(\R_+)$ are defined by
\begin{align*}
F_jg(x)&:=|x|^{-\frac{N-1}{2}}g(|x|)Q_j(\omega), \quad g\in L^2(\R_+),
\\
G_jf(r)&:=r^{\frac{N-1}{2}}\int_{S^{N-1}} f(r,\omega)Q_j(\omega)\,d\omega, \quad f\in L^2(\R^N).
\end{align*}
Here $\{Q_j\;;\;j\in \N\}$ is a orthonormal basis of $L^2(S^{N-1})$ 
consisting of spherical harmonics $Q_j$  of order $n_j$. 
$Q_j$ is an eigenfunction of Laplace-Beltrami operator $\Delta_{S^{N-1}}$ 
with respect to the eigenvalue $-\lambda_j=-n_{j}(N-2+n_j)$, 
see e.g., \cite[30, Chapter IX]{V} and also \cite[Ch.\ 4, Lemma 2.18]{SWbook}. 

\begin{lemma}\label{lem.FG}
For every $j\in \N_0=\N\cup\{0\}$ the following assertions hold
\begin{itemize}
\item[(i)]   
$\|F_{j}g\|_{L^2(\R^N)}=\|g\|_{L^2(\R_+)}$ for every $g\in L^2(\R_+)$, 
$\|G_{j}f\|_{L^2(\R_+)}\leq \|f\|_{L^2(\R^N)}$ for every $f\in L^2(\R^N)$;
\item[(ii)]   $G_{j}F_{j}=1_{L^2(\R_+)}$ and $F_{j}G_{j}[D(L_{\min})]\subset D(L_{\min})$;
\item[(iii)] for every $v\in C_0^\infty(\R_+)$, 
\[
G_j[L(F_jv)](r)=-v''(r)+\frac{b_{j}}{r^{2}}v(r), 
\]
where 
\[
b_{j}:=b+\left(\frac{N-2}{2}\right)^2-\frac{1}{4}+\lambda_{n_j}.
\]
\end{itemize}
\end{lemma}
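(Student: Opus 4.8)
The plan is to verify each of the three assertions by direct computation, treating the operators $F_j$ and $G_j$ as an explicit polar-coordinate decomposition that block-diagonalizes $L$ along spherical harmonics. First I would establish (i): the identity $\|F_j g\|_{L^2(\R^N)}=\|g\|_{L^2(\R_+)}$ follows by writing the $\R^N$-integral in polar coordinates $dx=r^{N-1}\,dr\,d\omega$, observing that $|x|^{-(N-1)/2}$ contributes a factor $r^{-(N-1)}$ inside $|F_jg|^2$ which cancels the Jacobian $r^{N-1}$, and then using $\int_{S^{N-1}}|Q_j(\omega)|^2\,d\omega=1$. For the bound on $G_j$, I would apply Cauchy--Schwarz on $S^{N-1}$ to $\int_{S^{N-1}}f(r,\omega)Q_j(\omega)\,d\omega$, giving $|G_jf(r)|^2\le r^{N-1}\int_{S^{N-1}}|f(r,\omega)|^2\,d\omega$ (again using $\|Q_j\|_{L^2(S^{N-1})}=1$), and integrate in $r$; Parseval over the full orthonormal system would in fact give equality when summed, but the stated inequality for a single $j$ is immediate.

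Next, for (ii), the identity $G_jF_j=1_{L^2(\R_+)}$ is a one-line computation: $G_j(F_jg)(r)=r^{(N-1)/2}\int_{S^{N-1}} r^{-(N-1)/2}g(r)Q_j(\omega)^2\,d\omega=g(r)$. The inclusion $F_jG_j[D(L_{\min})]\subset D(L_{\min})$ requires a little care: if $u\in C_0^\infty(\R^N\setminus\{0\})$, then $G_ju(r)=r^{(N-1)/2}\int_{S^{N-1}}u(r,\omega)Q_j(\omega)\,d\omega$ is smooth in $r$, supported away from $0$ and $\infty$, and $F_jG_ju(x)=|x|^{-(N-1)/2}G_ju(|x|)Q_j(\omega)$; since $Q_j$ is (the restriction of) a harmonic homogeneous polynomial, $|x|^{-(N-1)/2}\cdot|x|^{(N-1)/2}\cdot(\text{radial smooth})\cdot Q_j(\omega)=(\text{radial smooth})\cdot Q_j(x/|x|)$ is smooth on $\R^N\setminus\{0\}$ and compactly supported there, hence lies in $D(L_{\min})$.

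The substantive part is (iii), the computation of $G_j[L(F_jv)]$. The approach is to expand the Laplacian in polar coordinates, $\Delta=\pa_r^2+\frac{N-1}{r}\pa_r+\frac{1}{r^2}\Delta_{S^{N-1}}$, apply it to $F_jv(x)=r^{-(N-1)/2}v(r)Q_j(\omega)$, and use $\Delta_{S^{N-1}}Q_j=-\lambda_{n_j}Q_j=-n_j(N-2+n_j)Q_j$. The radial part $\pa_r^2+\frac{N-1}{r}\pa_r$ acting on $r^{-(N-1)/2}v(r)$ produces, after differentiating the product and collecting terms, $r^{-(N-1)/2}\bigl(v''(r)+\bigl[\frac{(N-1)(N-3)}{4}-\frac{(N-1)^2}{4}+\text{correction}\bigr]r^{-2}v(r)\bigr)$; the algebra collapses the coefficient of $r^{-2}v$ to exactly $-\bigl[(\frac{N-2}{2})^2-\frac14\bigr]$, which is the standard ``shift'' produced by conjugating the radial Laplacian by $r^{(N-1)/2}$. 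Adding the potential term $b r^{-2}$ and the angular eigenvalue contribution $-\lambda_{n_j}r^{-2}$, then applying $G_j$ (which by (ii)-type orthogonality just strips off $Q_j$ and the power $r^{-(N-1)/2}$), yields $-v''(r)+\bigl(b+(\frac{N-2}{2})^2-\frac14+\lambda_{n_j}\bigr)r^{-2}v(r)$, i.e. $-v''+b_j r^{-2}v$ with $b_j$ as stated. I expect the main obstacle to be nothing deep but rather the bookkeeping in this last computation — keeping the signs and the $(N-1)/2$ powers straight when differentiating $r^{-(N-1)/2}v(r)$ twice — so I would carry it out carefully once, perhaps by first recording the auxiliary identity $\bigl(\pa_r^2+\frac{N-1}{r}\pa_r\bigr)\bigl(r^{-(N-1)/2}w\bigr)=r^{-(N-1)/2}\bigl(w''-\frac{(N-1)(N-3)}{4r^2}w\bigr)$ and then substituting $w=v$, noting $\frac{(N-1)(N-3)}{4}=(\frac{N-2}{2})^2-\frac14$ wait — more precisely $(\frac{N-2}{2})^2-\frac14=\frac{(N-2)^2-1}{4}=\frac{(N-1)(N-3)}{4}$, which is the needed identity.
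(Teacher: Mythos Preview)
Your proposal is correct and follows essentially the same approach as the paper: the paper dismisses (i) and (ii) as ``direct computation'' and proves (iii) exactly as you do, by writing $L$ in polar coordinates, applying it to $F_jv$, and using $-\Delta_{S^{N-1}}Q_j=\lambda_{n_j}Q_j$ together with the identity $\frac{(N-1)(N-3)}{4}=\left(\frac{N-2}{2}\right)^2-\frac14$. Your auxiliary identity $\bigl(\pa_r^2+\frac{N-1}{r}\pa_r\bigr)\bigl(r^{-(N-1)/2}w\bigr)=r^{-(N-1)/2}\bigl(w''-\frac{(N-1)(N-3)}{4r^2}w\bigr)$ is precisely the computation the paper carries out implicitly.
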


\begin{proof}
(i) and (ii) follow easily by direct computation. We only prove (iii).
Let $v\in C_c^\infty(\R_+)$. 
Observing that 
\[
L=-\frac{\pa^2 }{\pa r^2}
  -\frac{N-1}{|x|}\frac{\pa}{\pa r}
  +\frac{b}{|x|^2}
  -\frac{1}{|x|^2}\Delta_{S^{N-1}},
\] 
we deduce 
\begin{align*}
L(F_j v)(x)
&=L\left(|x|^{-\frac{N-1}{2}}v(|x|)Q(\omega)\right)
\\
&=|x|^{-\frac{N-1}{2}}\biggl[
-v''(|x|)+
\left(b+\frac{(N-1)(N-3)}{4}+\lambda_{n_j}\right)
\frac{1}{|x|^{2}}v(|x|)
\biggr]
Q(\omega).
\end{align*}
Therefore, 
\[
G_j[L(F_jv)](r)
=-v''(r)+\frac{b_j}{r^2}.
\]
\end{proof}

\begin{proof}[Proof of Proposition \ref{Nd.gen}]
If $j\in \N_0$ satisfies $b_j\geq \frac{1}{4}$, then 
from Lemma \ref{lem.FG} (ii)
$L_{j,\min}:=F_{j}L_{\min}G_{j}$ is nonnegative, and hence 
there exists a Friedrichs extension $L_{j,F}$ of $L_{j,\min}$. 
This implies that 
\begin{align*}
\|(\lambda-L_{j,F})^{-1}\|\leq \frac{1}{|\lambda|}\qquad \lambda\in \C_+.
\end{align*}
If $j\in \N$ satisfies $b_j<-\frac{1}{4}$, then 
we choose $A_{j}=(a_{1,j},a_{2,j})\in \C^2\setminus \{(0,0)\}$ satisfying \eqref{good.rg} 
with $\nu_j=\sqrt{-b_j-1/4}$. 
By Proposition \ref{gen.LA}, we have 
\begin{align*}
\|(\lambda-L_{j,A_j})^{-1}\|\leq \frac{M_j}{|\lambda|}\qquad \lambda\in \C_+.
\end{align*}
Now we define the operator $\tilde{L}$ between $L_{\min}$ and $L_{\max}$ as follows:
\begin{align*}
D(\tilde{L}):=
\left(\bigoplus_{b_j\geq-1/4}F_{j}D(L_{j,F})\right)\oplus
\left(\bigoplus_{b_j<-1/4}F_{j}D(L_{j,A_j})\right);
\end{align*}
note that $\tilde{L}\supset L_{\min}$ is verified 
by Lemma \ref{lem.FG} (ii). 
Then we see $\lambda-\tilde{L}$ is injective for every $\lambda\in \C_+$. 
In fact, if $\lambda - \tilde{L}u=0$ for $u\in D(\tilde{L})$, then 
for every $j\in \N$, by the definition of $D(\tilde{L})$ 
it follows from Lemma \ref{lem.FG} (iii) that 
$(\lambda-L_{j,F})u_j=0$ with $u_j:=G_ju\in D(L_{j,F})$ when $b_j\geq-1/4$ 
and 
$(\lambda-L_{j,A_j})u_j=0$ with $u_j:=G_ju\in D(L_{j,A_j})$ when $b_j<-1/4$.
This implies that $u_j=0$ for every $j\in \N$, hence $u=\sum_{j\in\N}F_ju_j=0$.

\noindent Moreover, for every $f\in L^2(\R^N)$, we have $f=\lambda u -\tilde{L} u$, where we set 
\[
u:=
\sum_{b_j\geq-1/4}F_{j}(\lambda-L_{j,F})^{-1}G_{j}f
+
\sum_{b_j<-1/4}F_{j}(\lambda-L_{j,A_j})^{-1}G_{j}f\in D(\tilde{L}).
\]
Since the $\{j\in \N\;;\;b_j<-1/4\}$ is finite, 
$\tilde{M}:=\max\{M_j\;;\;b_j<-1/4\}$ is also finite.
Hence it follows from Lemma \ref{lem.FG} (i) that
for every $\lambda\in \C_+$, 
\begin{align*}
\|u\|_{L^2(\R^N)}^2
&=
\sum_{b_j\geq-1/4}\|F_{j}(\lambda-L_{j,F})^{-1}G_{j}f\|_{L^2(\R^N)}^2
+
\sum_{b_j<-1/4}\|F_{j}(\lambda-L_{j,A_j})^{-1}G_{j}f\|_{L^2(\R^N)}^2
\\
&=
\sum_{b_j\geq-1/4}\|(\lambda-L_{j,F})^{-1}G_{j}f\|_{L^2(\R_+)}^2
+
\sum_{b_j<-1/4}\|(\lambda-L_{j,A_j})^{-1}G_{j}f\|_{L^2(\R_+)}^2
\\
&\leq
\sum_{b_j\geq-1/4}\frac{1}{|\lambda|}\|G_{j}f\|_{L^2(\R_+)}^2
+
\sum_{b_j<-1/4}\frac{M_j}{|\lambda|}\|G_{j}f\|_{L^2(\R_+)}^2
\\
&\leq
\frac{\tilde{M}}{|\lambda|} 
\left(\sum_{b_j\geq-1/4}\|F_{j}G_{j}f\|_{L^2(\R^N)}^2
+
\sum_{b_j<-1/4}\|F_{j}G_{j}f\|_{L^2(\R^N)}^2\right)
\\&=
\frac{\tilde{M}}{|\lambda|} \|f\|_{L^2(\R^N)}^2.
\end{align*}
Therefore $\tilde{L}$ is closed, $\C_+\subset \rho(-\tilde{L})$ and 
\[
\|(\lambda-\tilde{L})^{-1}\|\leq \frac{\tilde{M}}{|\lambda|}.
\] 
This implies that $-\tilde{L}$ generates a bounded analytic semigroup on $L^2(\R^N)$.
Since we can choose all of $A_{j}$ satisfying \eqref{good.rg}, 
we can produce infinitely many (negative) generators between $L_{\min}$ and $L_{\max}$.
\end{proof}
{\small

}
\end{document}